\documentclass[letterpaper, 10pt, conference]{ieeeconf}

\IEEEoverridecommandlockouts
\usepackage{amsmath,amssymb,amsfonts,mathtools}
\usepackage{tikz}
\usetikzlibrary{arrows.meta,positioning}
\mathtoolsset{showonlyrefs=true}
\usepackage[ruled,vlined]{algorithm2e}
\usepackage{graphicx}

\newcommand{\R}{\mathbb{R}}
\newcommand{\E}{\mathbb{E}}
\newcommand{\ud}{\,\mathrm{d}}
\newcommand{\BAR}[1]{\overline{#1}}
\newcommand{\EPS}{\varepsilon}
\newcommand{\transpose}{^{\operatorname{T}}}

\newcommand{\Tmin}{T_{\mathrm{min}}}
\newcommand{\Tcov}{T_{\mathrm{cov}}}

\newcommand{\smax}{s_{\mathrm{max}}}

\usepackage{amsthm}

\newtheorem{theorem}{Theorem}
\newtheorem{lemma}{Lemma}
\newtheorem{prop}{Proposition}
\newtheorem{remark}{Remark}
\newtheorem{assu}{Assumption}
\newtheorem{cor}{Corollary}

\usepackage{xcolor}
\usepackage{cite}
\usepackage[colorlinks=true, allcolors=blue]{hyperref}

\title{\LARGE \bf The Price of Covertness: Dual-Control Navigation in Uncertain Flows under Adversarial Sensing}

\author{Ruimeng Hu$^{1,2}$, Botao Jin$^{2}$, and Xu Yang$^{1}$
\thanks{This work was partially supported by the ONR grant \#N00014-24-1-2432, the Simons Foundation (MP-TSM-00002783), and the NSF grant DMS-2420988.}
\thanks{$^{1}$Department of Mathematics, University of California, Santa Barbara, CA 93106, USA.
Email: \texttt{\{rhu, xy6\}@ucsb.edu}.}
\thanks{$^{2}$Department of Statistics and Applied Probability, University of California, Santa Barbara, CA 93106, USA.
Email: \texttt{b\_jin@ucsb.edu}.}
}

\begin{document}

\maketitle
\thispagestyle{empty}
\pagestyle{empty}

\begin{abstract}

Covert navigation in an uncertain flow couples motion planning with information acquisition. A vehicle must estimate the local flow to navigate, while estimation error induces corrective maneuvers that increase statistical distinguishability, or leakage, and hence the vehicle's detectability. We model this coupling by augmenting position with the estimation-error variance, whose evolution depends on the route through the local information rate. A small-error expansion shows that estimation uncertainty contributes a leading-order correction to the expected detectability rate. The resulting route-planning problem is characterized by a first-order Hamilton--Jacobi--Bellman (HJB) equation. The deadline-indexed value determines the covert-time frontier, while an exact sensitivity formula with respect to sensing quality shows that information acquired earlier along the route can reduce leakage incurred later, whereas information acquired later cannot reduce leakage already incurred. We then formulate a zero-sum sensing-allocation game, show that the leakage rate is convex in the sensing allocation, and prove that the defender may restrict attention to randomization over extreme single-site allocations. The resulting equilibrium is computed by column generation, with vehicle best responses obtained from the HJB equation. Numerical experiments illustrate the learning detour, the deadline--leakage tradeoff, the spatial-ordering effect, and the benefit of randomized sensing allocations.

\end{abstract}

\section{Introduction}\label{sec:I}

Covert path planning asks how an agent can reach a goal while limiting the information available to an observer. Existing formulations typically treat the environment governing the agent's motion as known during planning, as in work on covert and deceptive planning~\cite{Ornik2018,Savas2022} and information-theoretic privacy in control~\cite{Farokhi2019,Nekouei2019}. This assumption is restrictive for navigation in flow fields, where a vehicle may have only a forecast of the ambient current and must refine that forecast from local onboard measurements. Recent guidance methods for unmanned underwater vehicles based on reinforcement learning~\cite{Greeley2023} explicitly operate under such local sensing, achieving near-time-optimal transits while observing only the current and its gradient near the vehicle.

In our setting, the moving agent is a vehicle navigating an uncertain flow field, while the observer is a defender seeking to detect its presence. Environmental uncertainty affects covertness through the corrective maneuvers induced by estimation error. The magnitude of these maneuvers depends on the vehicle's estimation error, and the resulting corrections contribute to the statistical evidence available to the defender. At the same time, the rate at which the vehicle reduces its estimation error depends on the information encountered along its route. The vehicle may therefore accept additional transit time to pass through an informative region, reducing the leakage incurred later. Route choice thus affects both motion and information acquisition, giving rise to the dual-control structure studied in this paper.

\smallskip
\noindent\textbf{Contributions.}
The novelty of this work lies in how environmental learning enters covert navigation. First, unlike covert path-planning formulations that treat the environment as known, we incorporate the vehicle's uncertainty about the flow into the routing problem and connect it to detectability through the corrective maneuvers induced by estimation error. This leads to a reduced information-state model in which the route governs both motion and the evolution of the estimation-error variance (Section~\ref{sec:model}).

Second, we formulate the resulting vehicle problem as a deterministic dual-control problem and characterize its value through a first-order Hamilton--Jacobi--Bellman (HJB) equation (Section~\ref{sec:red}). We further derive a sensitivity relation that quantifies the value of environmental information for covert navigation. In contrast to informative path planning, where information acquisition supports navigation or estimation, here its value also arises from its effect on future detectability: information acquired earlier can reduce the leakage generated by subsequent corrective maneuvers.

Third, we make the defender's sensing configuration strategic rather than fixed (Section~\ref{sec:game}). We show that the resulting allocation favors randomization over extreme allocations and leads to a finite-support equilibrium. These structural results are combined with dynamic programming and column generation to obtain a computational method (Section~\ref{sec:computation}) for the coupled vehicle--defender zero-sum game.

\begin{figure*}[t]
\centering
\resizebox{\textwidth}{!}{
\begin{tikzpicture}[
  font=\footnotesize,
  box/.style={draw, rounded corners=2pt, align=center, inner sep=4pt, fill=black!5},
  >={Latex[length=1.9mm]}, line width=0.55pt]
\node[box] (dyn) at (0,0) {vehicle and information dynamics\\ $\dot x$ \eqref{eqn:dyn}, $\dot P$ \eqref{eqn:riccati}};
\node[box] (leak) at (5.1,0) {reduced leakage rate\\ $\ell_\eta(x,P)=D_\eta+\rho_\eta P$, Lemma~\ref{lem:leak}};
\node[box] (hjb) at (10.3,0) {vehicle dual-control prob. \\ $V_q(x, P, \tau)$, HJB \eqref{eqn:hjb}, \S\ref{sec:red}};
\node[box] (fro) at (15.5,0.62) {covert-time frontier,  \S\ref{sec:frontier}};
\node[box] (game) at (15.5,-0.62) {sensing-allocation game \\ \S\ref{sec:game}; Alg.~\ref{alg:do} in \S\ref{sec:computation}};
\draw[->] (dyn) -- (leak);
\draw[->] (leak) -- (hjb) node[midway,above,font=\scriptsize]{$\BAR\ell_q=\E_q[\ell_\eta]$};
\draw[->] (hjb.13) -- (fro.west);
\draw[->] (hjb.-13) -- (game.west);
\end{tikzpicture}}
\caption{Structure of the paper. Motion and information dynamics determine the leakage rate and dual-control HJB, whose solution yields the covert-time frontier and best responses for the sensing game.}
\label{fig:roadmap}
\end{figure*}

\smallskip
\noindent\textbf{Relation to prior work.} The coupling between control and route-dependent information acquisition is closely related to the dual effect in stochastic control \cite{Feldbaum1960,BarShalomTse1974} and to informative path planning \cite{HollingerSukhatme2014}, but here information acquisition is also valued for its effect on future detectability.

More directly, this work builds on the Hamilton--Jacobi navigation framework of \cite{HuYang2026paper1}, where an observer infers a navigation objective from an observed trajectory. Here the arrival-time function from that framework reappears in the deadline formulation, while the information structure is different: the vehicle learns an uncertain environment as the defender accumulates evidence about its presence. The sequential-testing formulations of \cite{zhou2025integrating,zhou2025adversarial} consider a complementary adversarial-inference setting. The defender's sensing-allocation problem is also related to security games \cite{Tambe2011}, while related dynamic-game models study information revelation \cite{ralston2026information} and deception \cite{kim2026deception}.

\section{Model and Information Structure}\label{sec:model}

\subsection{Flow model and vehicle dynamics}

Let $\Omega\subset\R^d$, $d\in\{2,3\}$, be a bounded domain with target set $\mathcal T\subset\Omega$. Let the true current field be $v^{\mathrm{true}}(x) = \BAR v(x) + \delta v(x)$, where $\BAR v$ denotes the nominal forecast field and $\delta v$ the forecast error. The field $\BAR v$ is known to both parties, whereas $\delta v$ is unknown. The vehicle moves with maximum speed $\smax$ relative to the water. Under the reduced model introduced in Assumption~\ref{assu:ce}, its nominal dynamics are
\begin{equation}\label{eqn:dyn}
\dot x_s = \smax\, u_s + \BAR v(x_s),\; u_s\in U:=\{u\in\R^d: |u|\le 1\},
\end{equation}
being used for route planning. The effect of $\delta v$ on corrective maneuvers and detectability is modeled in Sections~\ref{sec:II-B} and \ref{sec:II-C}. The vehicle must reach $\mathcal T$.

Covertness concerns the defender's ability to \emph{detect} the vehicle against a background detection process, not to infer a hidden destination: the defender tests the null $H_0$ (no vehicle; detections arise from background clutter at rate $\lambda_0$) against the alternative $H_1$ (vehicle present, moving under \eqref{eqn:dyn}; detection rate $\lambda_1$). The vehicle seeks to reach $\mathcal T$ while limiting the accumulated statistical distinguishability between $H_1$ and $H_0$.

\subsection{The vehicle's information state and uncertainty dynamics}\label{sec:II-B}

The vehicle observes the flow only in a neighborhood of its position,  and uses these local observations to update its estimate of the unknown $\delta v$. To obtain a finite-dimensional information state, we consider a single-mode representation $$\delta v(x)=\phi(x)\theta,$$ 
where $\theta\in\R$ is an unknown amplitude and $\phi: \Omega \to \R^d$ is a known profile. Let $\hat \theta_s$ denote the vehicle's estimate of $\theta$ at time $s$, $e_s :=\theta-\hat\theta_s$ the estimation error, and $P_s:= \mathrm{Var}(e_s)$ its variance.  We take the variance to evolve according to a Riccati law
\begin{equation}\label{eqn:riccati}
\dot P_s = g(x_s,P_s;\kappa) := -\kappa\,\omega(x_s)\,P_s^2,
\end{equation}
where $\kappa \geq 0$ parameterizes sensing quality with $\kappa=0$ corresponding to no learning and large $\kappa$ to faster information acquisition. $\omega(x)\in[0,1]$ describes the local informativeness of the flow at $x$, and denotes the normalized Fisher-information rate of the onboard measurement about $\theta$.

We adopt \eqref{eqn:riccati} as a reduced deterministic information-state model, motivated by the covariance dynamics of continuous-time parameter estimation and Kalman--Bucy filtering \cite{KalmanBucy1961}. Equivalently, $ \frac{d}{ds}P_s^{-1}=\kappa\omega(x_s)$, so that the estimation precision accumulates at the local information rate $\kappa\omega(x_s)$. Rather than carrying the full stochastic estimation process into the planning problem, we retain only $P_s$ as the information state. Since the local information rate $\omega(x_s)$ depends on position, the evolution of $P_s$ depends on the route taken by the vehicle.

At each time $s$, we write the estimation error as $e_s=\sqrt{P_s}\,\xi_s$ where $\E[\xi_s]=0$, $\E[\xi_s^2]=1$ and  $\xi_s$ has a time-independent marginal distribution with bounded support. Thus $P_s$ determines the scale of the estimation error in the reduced model.

\begin{assu}\label{assu:ce}
Assume the forecast error is small relative to $\smax$, $|\delta v|/\smax=O(\epsilon)$ with $\epsilon\ll1$. The vehicle selects a nominal control $u_s\in U$ based on the forecast field $\BAR v$, with nominal trajectory \(x_s\) governed by \eqref{eqn:dyn}. The estimation error $e_s$ induces a corrective maneuver $r_s$, modeled by the linear response $r_s = K(x_s)e_s$, where \(K: \Omega \to \mathbb R^d\) is a prescribed correction gain. The effect of $r_s$ on the route-planning dynamics \eqref{eqn:dyn} is neglected, while its effect on the defender’s point-process detection model is retained; see Section~\ref{sec:II-C}.
\end{assu}

The term $r_s$ is not an independent control variable, but the first-order response of the low-level controller to the estimation error $e_s$. Although $r_s = K(x_s)\sqrt{P_s}\xi_s$ is neglected from the nominal planning dynamics~\eqref{eqn:dyn}, its effect is retained in the detection model of Section~\ref{sec:II-C}. Consequently, $P_s$ remains relevant to the routing problem and is retained as part of the reduced state.

Since $\kappa, \omega(x) \geq 0$, \eqref{eqn:riccati} implies $\dot P_s\le0$, while $P = 0$ is an equilibrium. Hence, for initial variance $P_0$,  $P \in [0, P_0]$. The reduced state is therefore $(x,P)\in\Omega\times[0,P_0]$ with dynamics
\begin{equation}\label{eqn:augdyn}
\dot x_s = f(x_s,u_s):=\smax u_s + \BAR v(x_s),\; \dot P_s = g(x_s,P_s;\kappa).
\end{equation}

\subsection{The defender's sensing allocation and detection model} \label{sec:II-C}

The defender allocates a total sensing budget $\Pi >0$ over $S$ candidate sites,
\begin{equation}\label{eqn:simplex}
\eta\in\mathcal A:=\Big\{\eta\in\R_+^S:\textstyle\sum_{j=1}^S\eta_j = \Pi\Big\}.
\end{equation}
The allocation $\eta$ determines the defender's spatial sensing capability. We model the resulting detection events as a point process: under hypothesis $H_i$, $i\in\{0,1\}$, let \(\lambda_i(x,r;\eta)\) denote the event intensity when the vehicle is at \(x\), undergoes corrective maneuver \(r\), and the sensing allocation is \(\eta\).

\begin{assu}\label{assu:affine}
For $i = 0, 1$, the intensity \(\lambda_i\) is continuous in $(x,r,\eta)$, affine in \(\eta\), and satisfies $0<\underline\lambda\le\lambda_i\le\BAR\lambda<\infty$. 
\end{assu}

To quantify the detectability of the vehicle's presence, we use the Kullback--Leibler divergence rate between the point-process models under $H_1$ and $H_0$. For Poisson intensities $a, b >0$, the rate is 
\begin{equation}\label{eqn:idiv}
\mathsf D(a,b) := a\log\tfrac ab - a + b.
\end{equation}
The positive intensity bounds in Assumption~\ref{assu:affine} ensure that \(\mathsf D\) is smooth with bounded derivatives on the admissible range $[\underline \lambda, \BAR \lambda]$, as required in Proposition~\ref{prop:wellposed}. Then, for a realized corrective maneuver \(r\), the instantaneous detectability rate is $\mathsf D(\lambda_1(x, r;\eta), \lambda_0(x, r;\eta))$. From Section~\ref{sec:II-B}, $r = K(x) e = K(x) \sqrt{P} \xi$. Hence the instantaneous detectability rate is random even for fixed $x, P, \eta$. We therefore define the exact expected detectability rate:
$$
\ell^{\mathrm{ex}}_\eta(x,P) := \E_\xi\big[\mathsf D\big(\lambda_1(x,K(x)\sqrt P\xi;\eta),\lambda_0(x, K(x)\sqrt P \xi; \eta)\big)\big].
$$

\begin{lemma}\label{lem:leak}
Assume, for each fixed $x$ and $\eta$,  $\lambda_i(x,\cdot\,;\eta)$ are affine in $r$ 
with the bounds in Assumption~\ref{assu:affine}. Then 
\begin{equation}\label{eqn:leak}
\ell^{\mathrm{ex}}_\eta(x,P) = \underbrace{\mathsf D\big(\lambda_1(x,0;\eta),\lambda_0(x, 0; \eta)\big)}_{=:D_\eta(x)}
+ \rho_\eta(x)\,P + o(P),
\end{equation}
as $P \to 0$, where $\rho_\eta(x):=K(x)\transpose\Theta_\eta K(x)$ and $\Theta_\eta(x) := \tfrac12\nabla^2_{rr}\mathsf D\big(\lambda_1(x,r;\eta),\lambda_0(x,r;\eta)\big)\big|_{r=0}$. Moreover, $\Theta_\eta(x) \succeq 0$ and hence $\rho_\eta(x) \geq 0$.
\end{lemma}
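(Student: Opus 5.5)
Fix $x$ and $\eta$ and write $h(r):=\mathsf D\big(\lambda_1(x,r;\eta),\lambda_0(x,r;\eta)\big)$. The plan is a second-order Taylor expansion of $h$ at $r=0$, evaluated at $r=K(x)\sqrt P\,\xi$, followed by an expectation over $\xi$. By the affine hypothesis, $\lambda_i(x,r;\eta)=\lambda_i(x,0;\eta)+b_i\transpose r$ for some $b_i=b_i(x,\eta)\in\R^d$. Since $\mathsf D$ is smooth on $[\underline\lambda,\BAR\lambda]^2$, $h$ is $C^\infty$ on the admissible set of $r$. Because $\xi$ has bounded support, say $|\xi|\le M$, the argument $r=K(x)\sqrt P\xi$ satisfies $|r|\le |K(x)|M\sqrt P$, which tends to $0$ uniformly in $\xi$ as $P\to0$. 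For small $P$ the point $r$ therefore stays in a compact neighborhood of $0$ on which $h$ has bounded third derivatives; that the intensity bounds hold there is part of the hypothesis.

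Taylor's theorem with remainder gives
\[
h(r)=h(0)+\nabla h(0)\transpose r+\tfrac12 r\transpose\nabla^2h(0)\,r+R(r),\qquad |R(r)|\le C|r|^3 .
\]
Substitute $r=K\sqrt P\xi$ and take expectations. The constant term is $D_\eta(x)$. The linear term is $\sqrt P\,\nabla h(0)\transpose K\,\E[\xi]=0$. The quadratic term is $P\,K\transpose\Theta_\eta K\,\E[\xi^2]=\rho_\eta(x)P$. The remainder is bounded by $C|K|^3M^3P^{3/2}=o(P)$. This gives \eqref{eqn:leak}. Bounded support makes the dominated interchange of expectation and expansion trivial. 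This uniform control of the remainder is the only point needing care, and it is where bounded support is used.

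For $\Theta_\eta\succeq0$, use the chain rule together with the affinity, which kills the second derivatives of $\lambda_i$ in $r$. Writing $a=\lambda_1(x,0;\eta)$, $b=\lambda_0(x,0;\eta)$ and $B=[b_1\;b_0]\in\R^{d\times2}$, this gives $\nabla^2h(0)=B\,\nabla^2\mathsf D(a,b)\,B\transpose$. A direct computation shows $\partial_{aa}\mathsf D=1/a$, $\partial_{ab}\mathsf D=-1/b$ and $\partial_{bb}\mathsf D=a/b^2$. Hence
\[
\nabla^2\mathsf D(a,b)=\begin{pmatrix}1/a&-1/b\\-1/b&a/b^2\end{pmatrix}=a\begin{pmatrix}1/a\\-1/b\end{pmatrix}\begin{pmatrix}1/a&-1/b\end{pmatrix}\succeq0,
\]
since its determinant vanishes and its diagonal entries are positive; equivalently, $\mathsf D$ is jointly convex as a perspective function. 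It follows that $\Theta_\eta=\tfrac12 B\nabla^2\mathsf D\,B\transpose\succeq0$, and therefore $\rho_\eta=K\transpose\Theta_\eta K\ge0$.
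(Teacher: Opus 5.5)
Your proposal is correct and follows essentially the same route as the paper. Both arguments Taylor-expand $r\mapsto\mathsf D(\lambda_1,\lambda_0)$ to second order at $r=0$, control the remainder using the bounded support of $\xi$, and then show $\Theta_\eta\succeq0$ through the rank-one factorization of $\nabla^2\mathsf D$ together with affinity in $r$. Your explicit $O(P^{3/2})$ remainder bound and your formula $\nabla^2 h(0)=B\,\nabla^2\mathsf D\,B\transpose$ are slightly more quantitative versions of the paper's $o(|r|^2)$ remainder and its ``jointly convex composed with affine'' step.
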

\begin{proof}
Define  $F(r) := \mathsf D\left( \lambda_1(x,r;\eta), \lambda_0(x,r;\eta) \right)$ for fixed \(x\) and \(\eta\). By the positive intensity bounds in Assumption~\ref{assu:affine}, \(\mathsf D\) is smooth on the relevant range. Since both intensities are affine in \(r\), \(F\) is \(C^2\) near \(r=0\). Taylor expansion gives
$$ F(r) = F(0) +\nabla F(0)^\top r +\frac12 r^\top\nabla^2F(0)r +o(|r|^2). $$
Substituting $r=K(x)\sqrt P\,\xi$ and using \(\mathbb E[\xi]=0\) and \(\mathbb E[\xi^2]=1\), we obtain
$$ \mathbb E_\xi[F(r)] = F(0) + \frac P2 K(x)^\top\nabla^2F(0)K(x) +o(P). $$
The bounded support of \(\xi\) ensures that the expected Taylor remainder is \(o(P)\). Hence $\ell_\eta^{\rm ex}(x,P) = D_\eta(x)+\rho_\eta(x)P+o(P). $ It remains to show nonnegativity. For \(a,b>0\),
$$ \nabla^2 \mathsf D(a,b) = \begin{pmatrix} \frac1a & -\frac1b\\[1mm] -\frac1b & \frac{a}{b^2} \end{pmatrix} = \frac1a \begin{pmatrix} 1\\[1mm]-a/b \end{pmatrix} \begin{pmatrix} 1&-a/b \end{pmatrix} \succeq0. $$
Thus \(\mathsf D\) is jointly convex in its two arguments. Since  $r\mapsto \bigl( \lambda_1(x,r;\eta), \lambda_0(x,r;\eta) \bigr)$ is affine, \(F\) is convex in \(r\). Therefore  $\Theta_\eta(x) = \frac12\nabla^2F(0) \succeq0, $ and consequently \(\rho_\eta(x)\ge0\).

\end{proof}

Thus \(D_\eta(x)\) is the nominal detectability rate corresponding to zero corrective maneuver, while \(\rho_\eta(x)P\) is the leading-order increase in expected detectability induced by estimation error. Thus, to leading order, increasing $P$ can only increase, rather than decrease,  $\ell^{\mathrm{ex}}_\eta(x,P)$ through its effect on the scale of $r$.

For the remainder of the paper, we specialize to a site-based detection model. For each site $j$, let $S_j: \Omega \to \R_+$ denote its spatial detection profile. Under allocation $\eta$, the aggregate detection profile is $S_\eta(x):=\sum_{j}\eta_jS_j(x)$. We then model the intensities as:
\begin{equation}\label{eqn:model}
\lambda_0=\lambda_{\mathrm{bg}},\quad
\lambda_1(x,r;\eta)=\lambda_{\mathrm{bg}}+a_1+S_\eta(x)\big(1+b\transpose r\big),
\end{equation}
where $\lambda_{\mathrm{bg}}>0$ is the background detection rate, $a_1\geq 0$ is the baseline contribution of the vehicle under $H_1$, and $b \in \R^d$ determines how the corrective maneuver \(r\) modifies the intensity. We assume that the model parameters and the bounded support of $\xi$ are such that the intensities in \eqref{eqn:model} satisfy the bounds in Assumption~\ref{assu:affine} uniformly for $x \in \Omega$, $P \in [0, P_0]$ and $\eta \in \mathcal A$. Then Lemma~\ref{lem:leak} gives $\Theta_\eta(x)=\tfrac{S_\eta^2(x)}{2\lambda_1(x, 0 ;\eta)}bb\transpose$ and 
\begin{equation}\label{eqn:rho}
\rho_\eta(x)=k(x)\,\frac{S_\eta(x)^2}{\lambda_1(x,0;\eta)},\quad k(x)=\tfrac12\big(b\transpose K(x)\big)^2.
\end{equation}
We henceforth use  
\begin{equation}\label{eqn:ell}
\ell_\eta(x,P):=D_\eta(x)+\rho_\eta(x)P
\end{equation}
as the reduced detectability (leakage) rate and the running cost in the subsequent control and game formulations. Its time integral provides the corresponding leading-order approximation to the expected accumulated log-likelihood ratio under $H_1$ relative to $H_0$.

\begin{remark}[Convexity in sensing allocation]\label{rem:convexeta}
For the detection model~\eqref{eqn:model}, the map $\eta \mapsto \ell_\eta(x, P)$ is convex for each fixed $(x, P)$. Indeed, let  $c:=\lambda_{\mathrm{bg}}+a_1$ and $s:=S_\eta(x)$, then $\ell_\eta(x, P)=\mathsf D(c+s,\lambda_{\mathrm{bg}})+k(x)P\,s^2/(c+s)$. Since $s$ is affine in $\eta$, it suffices to verify convexity in $s$. 

Both terms in $\ell_\eta(x, P)$ are convex in $s$, with second derivatives $1/(c+s)$ and, up to the positive factor $2k(x)P$, $c^2/(c+s)^3$. Hence $\eta\mapsto\ell_\eta(x,P)$ is convex. The exact expected detectability rate $\ell^{\mathrm{ex}}_\eta(x,P)$ is also convex in $\eta$: for each realization of $\xi$, the intensity pair is affine in $\eta$, $\mathsf D$ is jointly convex, and expectation preserves convexity. This establishes the convexity condition required in Proposition~\ref{prop:extreme}.
\end{remark}

\subsection{Strategic information structure}\label{sec:info}

The defender commits, before the encounter, to a distribution $q\in\Delta(\mathcal A)$, where $\Delta(\mathcal A)$ denotes the set of probability distributions over sensing allocations. It then privately draws a single allocation $\eta\sim q$, which remains fixed throughout the encounter. The vehicle knows $q$, but \emph{not} the realized $\eta$, whereas the defender knows its realization.

Because the realized allocation is hidden, the vehicle must choose its route against the distribution $q$, rather than against a known allocation $\eta$. Accordingly the vehicle plans against the \emph{expected leakage rate}
\begin{equation}\label{eqn:ellbar}
\BAR\ell_q(x,P):=\E_{\eta\sim q}\big[\ell_\eta(x,P)\big].
\end{equation}
The averaging over $\eta$ is performed at the level of detectability rates $\ell_\eta$, rather than at the level of the point-process intensities $\lambda_i$. Indeed, because the KL divergence is jointly convex, averaging the intensities before evaluating the divergence generally yields a smaller value than averaging the resulting detectability rates.

\section{The Vehicle's Dual-Control Problem}\label{sec:red}

Fix a defender mixture $q$. For a control $u:[0,\tau]\to U$, let $\tau_{\mathcal T}(u):= \inf \{s\geq 0: x_s \in \mathcal T\}$ denote the first hitting time of the target $\mathcal T$. For a deadline $\tau>0$, define the admissible control set $\mathcal U(x,\tau) := \{u: [0, \tau] \to U: \tau_{\mathcal{T}}(u) \leq \tau\}$, where $x_s$ evolves according to \eqref{eqn:augdyn} with $x_0 = x$. The deadline-indexed value function is
\begin{equation}\label{eqn:value}
V_q(x,P,\tau):=\inf_{u\in\mathcal U(x,\tau)}\int_0^{\tau_{\mathcal T}(u)}\BAR\ell_q(x_s,P_s)\ud s .
\end{equation}
This is a Lagrange (exit-time) problem with a hard terminal requirement: the running cost is the accumulated leakage, while the deadline enters through the admissible set.

\subsection{Domain and boundary data}

\begin{assu}[Uniform controllability]\label{assu:control}
The field $\BAR v$ is Lipschitz on $\BAR\Omega$, and $\sup_{x\in\Omega}|\BAR v(x)| < \smax$.
\end{assu}

Let $\Tmin(x)$ be the minimum arrival time from $x$ to $\mathcal T$ under \eqref{eqn:augdyn}. It is the viscosity solution of the eikonal problem (cf. \cite{HuYang2026paper1}),
\begin{equation}\label{eqn:eikonal}
\smax|\nabla \Tmin| - \BAR v\cdot\nabla \Tmin = 1 \ \text{ in }\Omega\setminus\mathcal T,\quad \Tmin|_{\partial\mathcal T}=0 .
\end{equation}

\begin{prop}\label{prop:domain}
Under Assumption~\ref{assu:control}, $\mathcal U(x,\tau)\ne\emptyset$ if and only if $\tau\ge \Tmin(x)$; $\Tmin$ is Lipschitz on $\BAR\Omega$ and independent of $P$. Consequently $V_q$ is finite exactly on
\begin{equation}\label{eqn:domain}
\mathcal D:=\{(x,P,\tau): x\in\Omega,\ P\in[0,P_0],\ \tau\ge \Tmin(x)\},
\end{equation}
the epigraph of $\Tmin$. On the lower boundary $\tau=\Tmin(x)$ every admissible trajectory is time-optimal, so the boundary trace is the least accumulated leakage among time-optimal trajectories,
\begin{align}\label{eqn:bdry}
V_q(x,P,\Tmin(x)) &= G_q(x,P)\\
&\hspace{-20pt}:= \inf_{u \in \mathcal U(x, \Tmin(x))}\int_0^{\Tmin(x)}\BAR\ell_q(x_s,P_s)\ud s,
\end{align}
the infimum taken over controls whose trajectory reaches $\mathcal T$ in exactly $\Tmin(x)$.
\end{prop}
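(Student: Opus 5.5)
The plan is to prove the proposition in four steps: the characterization of $\mathcal U(x,\tau)\neq\emptyset$, the Lipschitz bound on $\Tmin$ with its $P$-independence, the finiteness domain $\mathcal D$, and the boundary identity \eqref{eqn:bdry}.

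\emph{Characterization.} Since $\Tmin(x)$ is by definition the infimum of $\tau_{\mathcal T}(u)$ over measurable $u:[0,\infty)\to U$, every $u\in\mathcal U(x,\tau)$ gives $\Tmin(x)\le\tau_{\mathcal T}(u)\le\tau$. For the converse:
\begin{itemize}
\item I would show the infimum is attained. The dynamics $f(x,u)=\smax u+\BAR v(x)$ are affine in $u$, $U$ is convex and compact, and $\BAR v$ is Lipschitz, so the velocity sets are convex and compact.
\item By Filippov's theorem, a minimizing sequence of trajectories is equicontinuous and bounded, hence has a uniformly convergent subsequence whose limit is again an admissible trajectory.
\item Assuming $\mathcal T$ is closed, the limit reaches $\mathcal T$ at time $\Tmin(x)$.
\end{itemize}
Given $\tau\ge\Tmin(x)$, the time-optimal control (extended arbitrarily on $[\Tmin(x),\tau]$) then belongs to $\mathcal U(x,\tau)$.

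\emph{Lipschitz continuity and $P$-independence.} Independence of $P$ is immediate: the $x$-equation in \eqref{eqn:augdyn} does not involve $P$, and the hitting time depends only on $x_s$. For the Lipschitz bound, set $c:=\smax-\sup|\BAR v|>0$ using Assumption~\ref{assu:control}. From any $y$ the vehicle can steer toward any nearby point $x$ by choosing
\[
\smax u=\frac{(x-y)|x-y|^{-1}\,\smax-\BAR v}{\cdot}
\]
suitably normalized. Concretely, $u$ is chosen so that the ground velocity $\smax u+\BAR v$ points along $x-y$ with speed at least $c$; this is possible because the ball $\{\smax u+\BAR v(y): |u|\le1\}$ contains the ball of radius $c$ about $0$. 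Hence the travel time from $y$ to $x$ is at most $|x-y|/c$, provided the straight segment stays in $\BAR\Omega$. Dynamic programming then gives
\[
\Tmin(y)\le |x-y|/c+\Tmin(x).
\]
The same argument in the reverse direction gives Lipschitz continuity with constant $1/c$. \textbf{This is the step I expect to be the main obstacle}, because the straight segment may leave $\Omega$ when $\Omega$ is not convex. I would handle it by assuming $\Omega$ is connected with the property that any two points are joined by a path in $\BAR\Omega$ of length at most $C|x-y|$ (e.g.\ Lipschitz boundary). This yields the constant $C/c$, after which one extends to $\BAR\Omega$ by continuity. Alternatively, one could cite the viscosity theory of the eikonal equation \eqref{eqn:eikonal} under the small-time controllability condition furnished by Assumption~\ref{assu:control}.

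\emph{Finiteness of $V_q$.} If $\mathcal U=\emptyset$, the infimum in \eqref{eqn:value} is $+\infty$. Otherwise $V_q$ is bounded by $\tau\sup\BAR\ell_q$, which is finite because $\ell_\eta$ is bounded: $\mathsf D$ is bounded on $[\underline\lambda,\BAR\lambda]^2$, $\rho_\eta$ is bounded, and $P\le P_0$. It is also nonnegative, since $\mathsf D\ge0$ and $\rho_\eta\ge0$ by Lemma~\ref{lem:leak}. Finally, $P_s\in[0,P_0]$ is invariant, as already noted after \eqref{eqn:riccati}, so the $P$-component of $\mathcal D$ is the full interval $[0,P_0]$.

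\emph{Boundary identity.} At $\tau=\Tmin(x)$, any $u\in\mathcal U(x,\tau)$ satisfies $\Tmin(x)\le\tau_{\mathcal T}(u)\le\Tmin(x)$, so every admissible trajectory is time-optimal. Substituting $\tau_{\mathcal T}(u)=\Tmin(x)$ in \eqref{eqn:value} gives exactly $G_q(x,P)$.
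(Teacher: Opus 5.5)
Your proposal is correct. For the $P$-independence and the boundary trace \eqref{eqn:bdry} it matches the paper's argument; the difference is in how the analytic content is obtained.

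\textbf{What the paper does.} The paper's proof does not establish finiteness or Lipschitz continuity of $\Tmin$ itself. It defers both to Section~III of the authors' earlier Hamilton--Jacobi navigation paper and to Bardi--Capuzzo-Dolcetta. It also never states that the minimum time is attained, which is exactly what the ``if'' direction needs at $\tau=\Tmin(x)$.

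\textbf{What you do instead.} You prove these facts directly. Attainment comes from convexity of the velocity sets $\{\smax u+\BAR v(x):u\in U\}$ plus Arzel\`a--Ascoli/Filippov compactness. The Lipschitz bound comes from the controllability margin $c=\smax-\sup|\BAR v|>0$, which gives the explicit constant $1/c$ (or $C/c$).

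\textbf{What each route buys.} The citation buys brevity. Your route makes visible hypotheses the paper leaves unstated:
\begin{itemize}
\item $\mathcal T$ must be closed.
\item Once trajectories are confined to $\BAR\Omega$, some quasiconvexity condition on $\Omega$ is needed. Assumption~\ref{assu:control} alone does not suffice: an inward-pointing cusp in $\partial\Omega$ already destroys Euclidean Lipschitz continuity.
\item $K$ must be bounded so that $\rho_\eta$, and hence $\BAR\ell_q$, is bounded.
\end{itemize}

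\textbf{Two small repairs.}
\begin{itemize}
\item Your displayed formula for $u$ is garbled. The intended control is $u_s=(c\,\hat e-\BAR v(x_s))/\smax$ with $\hat e=(x-y)/|x-y|$, evaluated at the current point $x_s$ rather than at $y$. It is admissible because $|u_s|\le(c+\smax-c)/\smax=1$, and it moves the vehicle along the segment at exactly speed $c$.
\item Under the state constraint, the extension on $[\Tmin(x),\tau]$ should not be ``arbitrary''. Use the absorbing convention, or the stationary control $u=-\BAR v(x)/\smax$, which is admissible by Assumption~\ref{assu:control}.
\end{itemize}
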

\begin{proof}
Under Assumption~\ref{assu:control}, finiteness and the Lipschitz property of $\Tmin$ follow from \cite[Sec.~III]{HuYang2026paper1} (see also \cite{BardiCapuzzo1997}). Independence of $P$ holds because the $x$-dynamics in \eqref{eqn:augdyn} do not involve $P$. When $\tau=\Tmin(x)$ the admissible set consists exactly of the time-optimal trajectories, over which $V_q$ minimizes the accumulated leakage, giving \eqref{eqn:bdry}.
\end{proof}

\begin{assu}\label{assu:trace}
For each defender mixture $q$, the boundary trace $G_q(x,P)$ defined in \eqref{eqn:bdry} is continuous on $\{\tau=\Tmin(x)\}$.
\end{assu}
Assumption~\ref{assu:trace} holds, for example, where the time-optimal trajectory is unique and varies continuously with $x$; at bifurcation points it is an additional regularity assumption.

\subsection{Dynamic programming and the HJB equation}

By dynamic programming, $V_q$ is a viscosity solution of
\begin{equation}\label{eqn:hjb}
\begin{split}
-\partial_\tau V_q + \inf_{u\in U}\big\{ f(x,u)\cdot\nabla_x V_q \big\} &+ g(x,P;\kappa)\,\partial_P V_q\\
&+ \BAR\ell_q(x,P) = 0
\end{split}
\end{equation}
on the interior of $\mathcal D$, with $V_q=0$ on $\mathcal T$ and $V_q = G_q$ on $\{\tau=\Tmin(x)\}$.

\begin{lemma}\label{lem:mono}
For every fixed mixture $q$ and $(x,\tau)$ with $\tau \geq \Tmin(x)$, the map $P\mapsto V_q(x,P,\tau)$ is nondecreasing. Wherever $V_q$ is differentiable, $\partial_PV_q\ge0$.
\end{lemma}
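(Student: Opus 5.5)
The plan is a direct trajectory-wise comparison: fix the route, vary only the initial variance, and compare the resulting leakage integrals pointwise in time. Fix $q$, $(x,\tau)$ with $\tau\ge\Tmin(x)$, and two initial variances $0\le P\le P'\le P_0$. The key structural fact is that the admissible set $\mathcal U(x,\tau)$ depends only on $x$ and $\tau$, because the $x$-dynamics in \eqref{eqn:augdyn} do not involve $P$ (as noted in Proposition~\ref{prop:domain}). Hence any control $u\in\mathcal U(x,\tau)$ is admissible from both $(x,P,\tau)$ and $(x,P',\tau)$. Both generate the same position trajectory $x_s$ and the same hitting time $\tau_{\mathcal T}(u)$, and differ only in the variance paths $P_s$ and $P'_s$.

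The first step is to show $P_s\le P'_s$ for all $s$. Along the fixed path, precision evolves by $\frac{d}{ds}P_s^{-1}=\kappa\omega(x_s)$, so
\[
P_s=\frac{P}{1+P\,c_s},\qquad c_s:=\kappa\int_0^s\omega(x_r)\ud r\ge0,
\]
with the obvious reading when $P=0$ (then $P_s\equiv0$). The map $p\mapsto p/(1+pc)$ is nondecreasing on $[0,\infty)$ for $c\ge0$, so $P_s\le P'_s$. Alternatively, one could invoke uniqueness for the scalar Lipschitz ODE $\dot P=-\kappa\omega(x_s)P^2$, whose solutions cannot cross.

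The second step is monotonicity of the running cost in $P$. By \eqref{eqn:ell} and \eqref{eqn:ellbar}, $\BAR\ell_q(x,P)=\E_q[D_\eta(x)]+\E_q[\rho_\eta(x)]\,P$. This is affine in $P$ with slope $\E_q[\rho_\eta(x)]\ge0$, since Lemma~\ref{lem:leak} gives $\rho_\eta\ge0$; explicitly, \eqref{eqn:rho} is a nonnegative expression. Hence $\BAR\ell_q(x_s,P_s)\le\BAR\ell_q(x_s,P'_s)$ for every $s$. Integrating over $[0,\tau_{\mathcal T}(u)]$ shows that the cost of $u$ from $P$ is at most its cost from $P'$. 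Taking the infimum over the common admissible set yields $V_q(x,P,\tau)\le V_q(x,P',\tau)$. The boundary case $\tau=\Tmin(x)$ is covered by the same argument, since there the admissible set is again $P$-independent, as stated in \eqref{eqn:bdry}.

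Finally, wherever $V_q$ is differentiable, the partial derivative of a nondecreasing function satisfies $\partial_PV_q=\lim_{h\downarrow0}\big(V_q(x,P+h,\tau)-V_q(x,P,\tau)\big)/h\ge0$; at $P=P_0$ one uses the one-sided limit from the left. The only mildly delicate point is the measurability and integrability of $\E_q[\rho_\eta]$ when $q$ is a general distribution. This follows from the continuity and boundedness of $\rho_\eta$ in $\eta$ on the compact simplex $\mathcal A$, which in turn comes from Assumption~\ref{assu:affine}. Otherwise no step is a real obstacle.
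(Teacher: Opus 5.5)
Your proposal is correct and follows essentially the same argument as the paper: fix an admissible control, which yields the same spatial path for both initial variances; order the variance paths (you use the closed form \eqref{eqn:Pclosed}, where the paper cites order preservation of the scalar Riccati equation); use that $\BAR\ell_q$ is nondecreasing in $P$ because $\rho_\eta\ge0$; then take the infimum over the common admissible set. You also spell out the derivative claim, including the one-sided limit at $P=P_0$, which the paper leaves implicit.
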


\begin{proof}
Fix $0\le P_1\le P_2\le P_0$ and an admissible control $u$, and let $P_s^{(i)}$ denote the solution of~\eqref{eqn:riccati} with initial variance $P_i$, under the same (x, $\kappa$). Since the $x$-dynamics do not depend on $P$, both solutions follow the same spatial trajectory. The scalar Riccati equation is order preserving, so $P_s^{(1)}\le P_s^{(2)}$ for all $s$. Each reduced rate $\ell_\eta$ is nondecreasing in $P$ (cf. Lemma~\ref{lem:leak}), hence the mixture-average $\BAR\ell_q(x,\cdot)$. The accumulated leakage under $u$ is no larger from $P_1$ than from $P_2$. Taking the infimum over the common admissible control set gives $V_q(x,P_1,\tau)\le V_q(x,P_2,\tau).$  
\end{proof}

Since $g(x, P; \kappa) = -\kappa \omega(x) P^2\le0$, the product $g\,\partial_PV_q \leq 0$ quantifies the reduction in continuation cost due to learning. Since the route determines \(\omega(x)\), it also controls this learning effect.

\begin{prop}[Well-posedness]\label{prop:wellposed}
Under Assumptions~\ref{assu:ce}--\ref{assu:trace}, suppose that $\Omega$ is viable under the admissible dynamics, $U$ is compact, $f$ and $g$ are bounded and Lipschitz in the state variables (uniform in $u$ for $f$), and $\BAR\ell_q$ is bounded and Lipschitz in $(x,P)$. 
Consider \eqref{eqn:hjb} on $\mathcal D$, with $V_q=0$ on $\mathcal T$, $V_q=G_q$ on $\{\tau=\Tmin(x)\}$, and the state-constraint viscosity boundary condition of \cite{Soner1986} on $\partial\Omega\times[0,P_0]$. 
Then $V_q$ is the unique bounded continuous viscosity solution. No boundary data are imposed at $P=0$ or $P=P_0$.
\end{prop}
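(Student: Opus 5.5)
The plan follows the standard three-part route for exit-time and state-constrained optimal control: continuity of $V_q$, the viscosity property through the dynamic programming principle, and uniqueness through a comparison principle. The one nonstandard ingredient is the treatment of the faces $P=0$ and $P=P_0$.

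\textbf{Continuity.} Boundedness is immediate. Since $\BAR\ell_q$ is bounded, say by $L$, and Proposition~\ref{prop:domain} gives admissible controls with hitting time at most $\tau$, we have $0\le V_q\le L\tau$ on $\mathcal D$. For continuity, I would establish the dynamic programming principle
\begin{equation}
V_q(x,P,\tau)=\inf_{u}\Big\{\int_0^{h\wedge\tau_{\mathcal T}}\BAR\ell_q(x_s,P_s)\ud s+V_q(x_h,P_h,\tau-h)\Big\}
\end{equation}
by the usual concatenation argument. Continuity in $(x,P)$ at fixed $\tau$ then comes from Gronwall estimates on $(x_s,P_s)$, using the Lipschitz bounds on $f$, $g$ and $\BAR\ell_q$. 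Perturbing $x$ can break admissibility, so I would repair it with the controllability margin $\smax-\sup|\BAR v|>0$ from Assumption~\ref{assu:control}. This margin lets a perturbed trajectory be steered back onto a nearby admissible one in time $O(|x-x'|)$. The same margin gives continuity in $\tau$, because extra deadline slack can be absorbed. Near the lower boundary $\tau=\Tmin(x)$ I would invoke Assumption~\ref{assu:trace} together with the Lipschitz continuity of $\Tmin$ to glue to $G_q$.

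\textbf{Viscosity property.} With the dynamic programming principle in hand, the sub- and supersolution inequalities in the interior of $\mathcal D$ follow by testing with smooth $\varphi$ and letting $h\to0$; this step is routine. On $\partial\Omega\times[0,P_0]$, viability of $\Omega$ together with the argument of \cite{Soner1986} shows that $V_q$ is a supersolution up to the boundary, which is exactly the state-constraint condition.

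\textbf{The faces $P=0$ and $P=P_0$.} Here I would use the sign structure of $g=-\kappa\omega P^2$. We have $g\le0$ and $g(x,0)=0$, so the vector field never exits $[0,P_0]$: it is tangent at $P=0$ and points inward at $P=P_0$. Characteristics therefore never enter from outside the $P$-interval. This invariance means the equation itself holds up to both faces in the viscosity sense, so no boundary data are needed there.

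\textbf{Comparison (the main obstacle).} I expect uniqueness to be the hardest step, because two boundary types interact with the $\tau$-direction: the state constraint in $x$ and the Dirichlet trace on the curved, merely Lipschitz surface $\tau=\Tmin(x)$. Treating $\tau$ as a time-like variable, I would take a bounded continuous subsolution $W$ and supersolution $Z$ with $W\le Z$ on $\mathcal T$ and on $\{\tau=\Tmin\}$. I would then double variables with the penalization
\begin{equation}
\Phi=W(x,P,\tau)-Z(y,Q,\sigma)-\tfrac{|x-y|^2+|P-Q|^2+|\tau-\sigma|^2}{2\varepsilon}-\beta\tau.
\end{equation}
At the $\partial\Omega$ boundary I would follow Soner's construction, shifting the subsolution point inward along an interior cone field available from viability.

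At the lower boundary, a maximum located there is controlled by the common data $G_q$. Points near that surface are handled by the uniform continuity of $W$, $Z$ and $G_q$ together with the Lipschitz property of $\Tmin$. The Lipschitz bounds on $f$, $g$ and $\BAR\ell_q$ make the Hamiltonian difference $O(|x-y|^2/\varepsilon+|x-y|)$. The $-\beta\tau$ term produces a strict inequality that contradicts a positive interior maximum. Letting $\varepsilon\to0$ and then $\beta\to0$ gives $W\le Z$, and uniqueness of $V_q$ follows.
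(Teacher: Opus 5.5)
Your outline follows the paper's proof: invariance of $[0,P_0]$ (so no data on the $P$-faces), a Lipschitz initial surface with continuous trace $G_q$, Soner-type comparison, and existence via the dynamic programming principle. The paper never argues that $V_q$ is continuous, so your continuity step is the one thing you must supply yourself. \textbf{As written, that step fails.}

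\textbf{The gap.} The margin $\smax-\sup|\BAR v|>0$ only lets you \emph{spend} extra deadline. Steering $x'\to x$ uses $O(|x-x'|)$ time, so it gives $V_q(x',P,\tau)\le V_q(x,P,\tau-C|x-x'|)+O(|x-x'|)$. Upper semicontinuity then needs $V_q(x,P,\tau-\delta)\to V_q(x,P,\tau)$ as $\delta\downarrow 0$. That is, a near-optimal route arriving exactly at $\tau$ must be modifiable to arrive earlier at small extra leakage, and the margin does not provide this. By \eqref{eqn:eikonal} (equivalently, the DPP for $\Tmin$), the slack $\tau-s-\Tmin(x_s)$ is nonincreasing along every trajectory. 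So the face $\{\tau=\Tmin\}$ is characteristic, and no control points strictly into $\mathcal D$ there. Here is a concrete failure. Suppose a route $B$ passing on the slow side of an adverse-flow region is a strict local, but not global, time minimizer, arriving at $\tau_B>\Tmin(x)$. Suppose also that $\BAR\ell_q$ is small on a thin tube around $B$ and large elsewhere. Then every route arriving before $\tau_B$ must leave a fixed neighbourhood of $B$, so $\lim_{\delta\downarrow0}V_q(x,P,\tau_B-\delta)>V_q(x,P,\tau_B)$, even though Assumption~\ref{assu:control} holds. The other direction, lower semicontinuity, does follow from compactness of trajectories, since $f(x,U)$ is convex.

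\textbf{The fix.} This example violates Assumption~\ref{assu:trace} exactly where $B$ merges into a time-optimal route, because two time-optimal routes with different leakage meet there. That assumption is what repairs the argument, but you must use it in the interior, not only to glue at the lower surface. Take a near-optimal $\gamma$ for deadline $\tau>\Tmin(x)$ that arrives exactly at $\tau$. Let $s^*>0$ be the first time with $s+\Tmin(x_s)=\tau$. The tail of $\gamma$ after $s^*$ is time-optimal, so its leakage is at least $G_q(x_{s^*},P_{s^*})$. For $s<s^*$, follow $\gamma$ up to $s$ and then a near-best time-optimal continuation. This route arrives at $s+\Tmin(x_s)<\tau$ with leakage $\int_0^s\BAR\ell_q\ud r+G_q(x_s,P_s)$. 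By continuity of $G_q$, as $s\uparrow s^*$ this tends to $\int_0^{s^*}\BAR\ell_q\ud r+G_q(x_{s^*},P_{s^*})$, which is at most the leakage of $\gamma$. This is the missing left-continuity in $\tau$; with it, your steering argument and the Lipschitz dependence on $P$ give continuity.

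Two smaller corrections. First, $V_q\le L\tau$ is not a uniform bound on $\mathcal D$; use $V_q\le G_q\le L\sup\Tmin$ instead. Second, the inward cone for Soner's construction on $\partial\Omega$ comes from the margin (take $u$ along the inward normal), not from viability, which gives only invariance.
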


\begin{proof}
Since $g(x,0;\kappa)=0$ and $g(x,P_0;\kappa)\le0$, the interval $[0,P_0]$ is invariant under \eqref{eqn:riccati}, so $(x,P)$ remains in the compact set $\Omega\times[0,P_0]$. Under the stated regularity, the Hamiltonian of \eqref{eqn:hjb} is continuous in the state variables and Lipschitz in the gradient variables, and satisfies the Crandall--Lions structure condition
\[
|H(x,P,p)-H(x',P',p)|\le C\,(1+|p|)\,|(x,P)-(x',P')|,
\]
which follows from the Lipschitz dependence of $f$, $g$, and $\BAR\ell_q$ on $(x,P)$. By Proposition~\ref{prop:domain} the initial surface $\{\tau=\Tmin(x)\}$ is Lipschitz, and its trace $G_q$ is continuous by Assumption~\ref{assu:trace}. Standard comparison for first-order state-constrained HJB equations then gives uniqueness in the class of bounded continuous viscosity solutions. Existence follows by verifying, through the dynamic programming principle, that $V_q$ is a viscosity solution.
\end{proof}

\subsection{The value of environmental information}\label{sec:vol}

We now fix the mission initial state \((x_0,P_0)\) and examine how the sensing quality \(\kappa\) affects leakage along a fixed route.

The dual effect arises because a route determines both the information acquired along the way and the uncertainty carried into later regions. Equation~\eqref{eqn:riccati} makes this dependence explicit:
\begin{equation}\label{eqn:Pclosed}
P_t = \frac{P_0}{1+\kappa P_0\int_0^t\omega(x_r)\ud r}.
\end{equation}

\begin{theorem}\label{thm:vol}
Fix a defender mixture $q$ and a route $\gamma = (x_s)_{0 \leq s \leq T_\gamma}$ from $x_0$ to $\mathcal T$, held fixed as $\kappa$ varies, where $T_\gamma$ is the first hitting time. Let $I_q(\gamma; \kappa) :=\int_0^{T_\gamma} \BAR \ell_q(x_s, P_s) \ud s$ be its accumulated leakage under sensing quality $\kappa$, and set $A_s:=\int_0^s\omega(x_r)\ud r$. Then, with $P$ given by \eqref{eqn:Pclosed}, for every $\kappa\ge0$,
\begin{equation}\label{eqn:volk}
\frac{dI_q(\gamma; \kappa)}{d\kappa}
= -P_0^2\!\int_0^{T_\gamma}\!\omega(x_r)\Big[\int_r^{T_\gamma}\!\frac{\partial_P \BAR\ell_q(x_s, P_s)}{(1+\kappa P_0A_s)^{2}}\ud s\Big]\ud r \le 0.
\end{equation}
\end{theorem}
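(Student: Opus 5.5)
The plan is to differentiate under the integral sign with the route held fixed and then rewrite the result as a double integral by Fubini. Because the route $\gamma$ does not depend on $\kappa$, neither do $T_\gamma$, the positions $x_s$, or $A_s=\int_0^s\omega(x_r)\ud r$. By \eqref{eqn:Pclosed}, the only $\kappa$-dependence in the integrand is through
\[
P_s(\kappa)=\frac{P_0}{1+\kappa P_0A_s}.
\]

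First I compute the derivative of the variance:
\[
\partial_\kappa P_s=-\frac{P_0^2A_s}{(1+\kappa P_0A_s)^2}.
\]
For $\kappa\ge0$ this is bounded uniformly in $s\in[0,T_\gamma]$, because $0\le A_s\le T_\gamma$ (recall $\omega\in[0,1]$) and the denominator is at least $1$.

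Next I justify differentiating $I_q$ under the integral. The reduced rate \eqref{eqn:ell} is affine in $P$, so
\[
\BAR\ell_q(x,P)=\E_q[D_\eta(x)]+\E_q[\rho_\eta(x)]\,P,
\qquad
\partial_P\BAR\ell_q(x,P)=\E_q[\rho_\eta(x)]=:\BAR\rho_q(x)\ge0.
\]
Here the sign comes from Lemma~\ref{lem:leak}. The coefficient $\BAR\rho_q$ is bounded on $\Omega$, because the intensities satisfy the uniform bounds of Assumption~\ref{assu:affine} and $S_\eta$ is bounded for $\eta\in\mathcal A$. The integrand's $\kappa$-derivative is therefore dominated by a constant on the finite horizon. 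Dominated convergence, or simply the explicit smooth dependence on $\kappa$, then gives
\[
\frac{dI_q}{d\kappa}=-P_0^2\int_0^{T_\gamma}\frac{\partial_P\BAR\ell_q(x_s,P_s)\,A_s}{(1+\kappa P_0A_s)^2}\ud s .
\]
If one does not want to use the affine structure, the same argument works with any $\BAR\ell_q$ that is $C^1$ in $P$ with $\partial_P\BAR\ell_q$ bounded on $\Omega\times[0,P_0]$. Since $P_s$ stays in $[0,P_0]$, the bound is uniform in $\kappa$.

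Finally I substitute $A_s=\int_0^s\omega(x_r)\ud r$ and swap the order of integration over the triangle $\{0\le r\le s\le T_\gamma\}$. Fubini applies because the integrand is bounded and measurable. The swap produces exactly \eqref{eqn:volk}. The sign $\le0$ follows because $\omega\ge0$, $\partial_P\BAR\ell_q\ge0$, and the denominators are positive.

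The main obstacle is the interchange of derivative and integral, since it needs a uniform bound on $\partial_P\BAR\ell_q$. For the site-based model this is immediate from affinity in $P$ and the bounded intensities. I would state that bound explicitly and note that the case $\kappa=0$ is covered by the one-sided derivative. The Fubini rewrite is what gives the formula its interpretation: information collected at time $r$ lowers leakage only at later times $s\ge r$.
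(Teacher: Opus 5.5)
Your proposal is correct and follows essentially the same route as the paper's proof: compute $\partial_\kappa P_s=-P_0^2A_s/(1+\kappa P_0A_s)^2$ from \eqref{eqn:Pclosed}, differentiate under the integral, exchange the order of integration over $\{0\le r\le s\le T_\gamma\}$, and take the sign from $\partial_P\BAR\ell_q=\E_{\eta\sim q}[\rho_\eta]\ge0$ (Lemma~\ref{lem:leak}). Your dominated-convergence and Fubini justifications are details the paper leaves implicit. One small nit: boundedness of $\BAR\rho_q$ does not follow from boundedness of $S_\eta$ alone. It follows from the uniform intensity bounds, which control the product $S_\eta\,b\transpose K$, together with $\lambda_1\ge\underline\lambda$.
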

\begin{proof}
By \eqref{eqn:Pclosed}, $\partial_\kappa P_s=-P_0^2A_s/(1+\kappa P_0A_s)^{2}$. Insert into $dI_q/d\kappa=\int_0^{T_\gamma}\partial_P \BAR\ell_q(x_s, P_s)\,\partial_\kappa P_s\,ds$ and exchange the order of integration over $\{(r,s):r\le s\}$ gives \eqref{eqn:volk}. Finally, $\partial_P\BAR\ell_q(x,P) = \mathbb E_{\eta\sim q}[\rho_\eta(x)] \ge 0$
by Lemma~\ref{eqn:leak}, so the derivative is nonpositive.
\end{proof}

Equation~\eqref{eqn:volk} shows a temporal ordering effect along the route: information acquired at time \(r\) can reduce leakage only at later times \(s\ge r\). Hence sensing is most valuable when informative regions are encountered before regions where leakage is sensitive to $P$, as measured by $\partial_P \BAR \ell_q$. If a $P$-sensitive region is encountered before the informative region, its leakage cannot benefit from information acquired later. This ordering effect is illustrated numerically in Section~\ref{sec:computation}. Since the admissible route set is independent of \(\kappa\), taking the infimum over routes in Theorem~\ref{thm:vol} shows that, for every fixed \(q\), \(V_q(x_0,P_0,\tau;\kappa)\) is nonincreasing in \(\kappa\).

\section{The Sensing-Allocation Game}\label{sec:game}

\subsection{Game value and covert-time frontier}\label{sec:frontier}

At a fixed deadline $\tau$, the defender chooses a sensing mixture $q\in\Delta(\mathcal A)$ to maximize the vehicle's expected leakage, then the vehicle chooses an admissible route $\gamma$ to minimize it. For a route $\gamma$ with first hitting time $T_\gamma$, let $I_\eta(\gamma) := \int_0^{T_\gamma} \ell_\eta(x_s,P_s)\ud s$ denote its accumulated leakage under sensing allocation $\eta$, and
let $\Gamma(\tau) := \{\gamma: T_\gamma \leq \tau\}$. Then, for a fixed defender mixture $q$, $
V_q(x_0,P_0,\tau) = \inf_{\gamma\in\Gamma(\tau)} \mathbb E_{\eta\sim q}[I_\eta(\gamma)]$, consistent with the vehicle's problem of Section~\ref{sec:red}. The resulting max-min value is
\[
W(\tau) := \sup_{q\in\Delta(\mathcal A)}V_q(x_0,P_0,\tau) = \sup_{q\in\Delta(\mathcal A)} \inf_{\gamma\in\Gamma(\tau)} \mathbb E_{\eta\sim q}[I_\eta(\gamma)].
\]

For a fixed sensing mixture $q$, define the covert-time
frontier by
\[
T_{\rm cov}^q(B) := \inf\{\tau:V_q(x_0,P_0,\tau)\le B\}.
\]
At the game level, define
\[
T_{\rm cov}^{\rm game}(B) := \inf\{\tau:W(\tau)\le B\},
\]
with the convention $\inf\emptyset=+\infty$. Thus
$T_{\rm cov}^{\rm game}(B)$ is the minimum deadline for which
the vehicle can keep its expected leakage below $B$ against every defender mixture. The \emph{price of covertness} for a fixed mixture $q$ is
\[
\Delta T_{\rm cov}^q(B) := T_{\rm cov}^q(B)-T_{\min}(x_0),
\]
and analogously at the game level.

We interpret $B$ as an upper bound on the expected log-likelihood evidence accumulated by the defender before arrival. Under the standard Wald large-threshold approximation, a detection threshold associated with a small target error probability $\EPS$ scales as $O(|\log\EPS|)$. Our formulation controls expected accumulated evidence; the corresponding realized stopping-time problem is studied in~\cite{zhou2025integrating,zhou2025adversarial}.

\begin{cor}\label{cor:mono}
For each fixed mixture $q$, $\Tcov^{q}(B;\kappa)$ is nonincreasing in $\kappa$, and so is the equilibrium frontier $\Tcov^{\mathrm{game}}(B;\kappa)$.

\end{cor}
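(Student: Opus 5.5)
The plan is to reduce both claims to a pointwise monotonicity in $\kappa$ of the value functions, and then use the fact that a sublevel-set infimum is order-reversing. Fix $q$ and $\kappa_1\le\kappa_2$, and write $V_q(\tau;\kappa)$ for $V_q(x_0,P_0,\tau;\kappa)$.

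\emph{Step 1 (per mixture).} The remark following Theorem~\ref{thm:vol} gives $V_q(\tau;\kappa_2)\le V_q(\tau;\kappa_1)$ for every $\tau\ge\Tmin(x_0)$. The justification is that for each fixed route $\gamma\in\Gamma(\tau)$, \eqref{eqn:volk} shows $\kappa\mapsto I_q(\gamma;\kappa)$ is nonincreasing. Moreover, $\Gamma(\tau)$ does not depend on $\kappa$, because the $x$-dynamics \eqref{eqn:dyn} do not involve $P$ or $\kappa$. Taking the infimum over $\gamma$ preserves the inequality.

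\emph{Step 2 (frontier for fixed $q$).} The sets $\{\tau: V_q(\tau;\kappa)\le B\}$ are nested: any $\tau$ with $V_q(\tau;\kappa_1)\le B$ also satisfies $V_q(\tau;\kappa_2)\le V_q(\tau;\kappa_1)\le B$. Hence the $\kappa_1$-set is contained in the $\kappa_2$-set, and the infimum over the larger set is no larger, so $\Tcov^q(B;\kappa_2)\le\Tcov^q(B;\kappa_1)$. This holds including the convention $\inf\emptyset=+\infty$. No continuity or monotonicity in $\tau$ is needed, since only set inclusion is used.

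\emph{Step 3 (game).} Let $W(\tau;\kappa)=\sup_q V_q(\tau;\kappa)$. The strategy space $\Delta(\mathcal A)$ does not depend on $\kappa$, so the supremum of the pointwise inequality from Step 1 gives $W(\tau;\kappa_2)\le W(\tau;\kappa_1)$. The same set-inclusion argument as in Step 2 then yields $\Tcov^{\mathrm{game}}(B;\kappa_2)\le\Tcov^{\mathrm{game}}(B;\kappa_1)$.

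The only delicate point is Step 1's use of \eqref{eqn:volk}. It requires $I_q(\gamma;\kappa)$ to be differentiable in $\kappa$ with derivative $\le 0$ for every admissible route, not only the optimal ones. This follows from differentiating under the integral: $\partial_\kappa P_s$ is bounded on $[0,T_\gamma]$ and $\partial_P\BAR\ell_q=\E_q[\rho_\eta]$ is bounded. As a cross-check, monotonicity can be obtained without differentiation. By \eqref{eqn:Pclosed}, $P_s$ is pointwise nonincreasing in $\kappa$ along a fixed route, and $\BAR\ell_q$ is nondecreasing in $P$ by Lemma~\ref{lem:leak}. Therefore $I_q(\gamma;\kappa)$ is nonincreasing in $\kappa$ directly, and I would present this as the primary argument.
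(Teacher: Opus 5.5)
Your proposal is correct and follows the paper's proof: per-route monotonicity in $\kappa$, with the route set independent of $\kappa$, gives $V_q(x_0,P_0,\tau;\kappa)$ nonincreasing in $\kappa$; this is preserved under the supremum over $q$; and thresholding in $\tau$ finishes, which you make precise through sublevel-set inclusion. Your derivative-free justification of the per-route step, using $P_s$ nonincreasing in $\kappa$ by \eqref{eqn:Pclosed} and $\rho_\eta\ge 0$, is a slightly more elementary substitute for quoting the sign of \eqref{eqn:volk} and establishes the same fact.
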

\begin{proof}
We have proved \(V_q(x_0,P_0,\tau;\kappa)\) is nonincreasing in \(\kappa\). Taking the supremum over \(q\) preserves pointwise monotonicity, so \(W(\tau;\kappa)\) is also nonincreasing in \(\kappa\). Thresholding in \(\tau\) gives the result.
\end{proof}

For fixed \(q\), a single backward solution of the HJB yields \(V_q(x_0,P_0,\tau)\) for all deadlines, and hence the entire frontier \(T_{\rm cov}^q(B)\) by thresholding. At the game level, the maximizing mixture \(q^\star(\tau)\) may vary with \(\tau\), so the outer optimization must be repeated across deadlines.

\subsection{Minimax formulation}

To establish a minimax equality, we compactify the vehicle's route set using relaxed controls. Since $U$ is convex and the dynamics are affine in $u$, every relaxed control has a barycentric ordinary control generating the same state trajectory. Thus relaxation does not change the vehicle's value. We therefore retain the notation $\Gamma(\tau)$ for the relaxed-control closure of the admissible route set. A trajectory reaching the target before $\tau$ is extended to $[0,\tau]$ by an absorbing terminal state with zero running leakage.

The vehicle's mixed strategies are probability measures
$\sigma\in\Delta(\Gamma(\tau))$. Since the payoff is linear in
$\sigma$, for every fixed $q$, $ \inf_{\sigma\in\Delta(\Gamma(\tau))} \mathbb E_{\eta\sim q}\mathbb E_{\gamma\sim\sigma} [I_\eta(\gamma)] = V_q(x_0,P_0,\tau)$.

\begin{prop}[Minimax equality]
For every $\tau\ge T_{\min}(x_0)$,
\begin{align}
W(\tau) =  \sup_{q\in\Delta(\mathcal A)} \inf_{\sigma\in\Delta(\Gamma(\tau))} \mathbb E_{\eta\sim q}\mathbb E_{\gamma\sim\sigma} [I_\eta(\gamma)]\\
= \inf_{\sigma\in\Delta(\Gamma(\tau))} \sup_{q\in\Delta(\mathcal A)} \mathbb E_{\eta\sim q}\mathbb E_{\gamma\sim\sigma} [I_\eta(\gamma)].
\end{align}
Moreover, the extrema are attained.
\end{prop}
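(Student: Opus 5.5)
The plan is to apply a topological minimax theorem (Sion's, or Ky Fan's) to the bilinear payoff
\[
\Phi(q,\sigma):=\E_{\eta\sim q}\E_{\gamma\sim\sigma}[I_\eta(\gamma)]
\]
on $\Delta(\mathcal A)\times\Delta(\Gamma(\tau))$. Both strategy sets carry the weak-$*$ topology. The first equality is already recorded before the statement: since $\Phi$ is linear in $\sigma$, $\inf_\sigma\Phi(q,\sigma)=V_q(x_0,P_0,\tau)$. It therefore remains to prove $\sup_q\inf_\sigma\Phi=\inf_\sigma\sup_q\Phi$ and to show attainment.

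\textbf{Step 1 (compactness).} $\mathcal A$ is a compact simplex, so $\Delta(\mathcal A)$ is weak-$*$ compact by Prokhorov. For the route side, extend each trajectory to $[0,\tau]$ by the absorbing state and use relaxed controls. Their set is compact, the dynamics are affine in $u$, and $f$ is Lipschitz, so the induced trajectories $(x_s,P_s)$ form a compact subset of $C([0,\tau];\BAR\Omega\times[0,P_0])$ by Arzel\`a--Ascoli. The constraint $T_\gamma\le\tau$ defines a closed subset: hitting times of a closed target are lower semicontinuous under uniform convergence, since a limit of trajectories that enter $\mathcal T$ by time $\tau$ also enters the closed set $\mathcal T$ by time $\tau$. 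Hence $\Gamma(\tau)$ is compact metric and $\Delta(\Gamma(\tau))$ is weak-$*$ compact. Nonemptiness comes from Proposition~\ref{prop:domain}.

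\textbf{Step 2 (continuity of the kernel).} Write $I_\eta(\gamma)=\int_0^\tau \ell_\eta(x_s,P_s)\mathbf 1\{s<T_\gamma\}\ud s$. This is the delicate point and I expect it to be the main obstacle, because the indicator makes $\gamma\mapsto I_\eta(\gamma)$ possibly discontinuous. A trajectory can graze $\mathcal T$ and then leave, so $T_\gamma$ is only lower semicontinuous. Two remedies are available.
\begin{itemize}
\item[(a)] Use the absorbing extension to define the limit trajectory as stopped at its own hitting time. Then, along a convergent sequence, $\liminf T_{\gamma_n}\ge T_\gamma$, and since $\ell_\eta\ge0$ (by Lemma~\ref{lem:leak}), $\gamma\mapsto I_\eta(\gamma)$ is lower semicontinuous.
\item[(b)] Alternatively, observe that Sion's theorem needs only lower semicontinuity in the minimizer's variable, together with upper semicontinuity in the maximizer's variable.
\end{itemize}
I will use (b) and secure the following two properties.
\begin{itemize}
\item[(i)] $(\eta,\gamma)\mapsto I_\eta(\gamma)$ is jointly lower semicontinuous and bounded. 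Boundedness holds because $\ell_\eta\le C$ uniformly, by Assumption~\ref{assu:affine}, and $T_\gamma\le\tau$. It follows, via the Portmanteau theorem, that $\sigma\mapsto\Phi(q,\sigma)$ is lsc.
\item[(ii)] For fixed $\gamma$, $\eta\mapsto I_\eta(\gamma)$ is continuous. Indeed $\ell_\eta$ is continuous in $\eta$ uniformly on the compact state set, and dominated convergence applies. Hence $q\mapsto\Phi(q,\sigma)$ is linear and weak-$*$ continuous; full continuity in $q$ for fixed $\sigma$ again follows from dominated convergence.
\end{itemize}

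\textbf{Step 3 (minimax and attainment).} $\Phi$ is bilinear, hence convex in $\sigma$ and concave in $q$. It is lsc in $\sigma$ on a compact convex set and usc (in fact continuous) in $q$. Sion's theorem therefore gives equality of the two sides.

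Attainment on the vehicle side follows because $\sigma\mapsto\sup_q\Phi(q,\sigma)$ is a supremum of lsc functions, hence lsc, on a compact set, so its infimum is attained. Attainment on the defender side requires $q\mapsto\inf_\sigma\Phi(q,\sigma)=V_q$ to be usc. This holds because it is an infimum of functions continuous in $q$, and a usc function on a compact set attains its supremum.

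Finally, an optimal $\sigma$ can be replaced by a pure route: for fixed $q$ the payoff is linear in $\sigma$, so a best response can be chosen among Dirac measures. Mixed routes are needed only for the $\inf\sup$ side, which explains why the statement is phrased over $\Delta(\Gamma(\tau))$.
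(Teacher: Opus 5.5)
Your proof is correct and has the same skeleton as the paper's: compactness of $\Gamma(\tau)$ via relaxed controls, Prokhorov compactness of $\Delta(\mathcal A)$ and $\Delta(\Gamma(\tau))$, bilinearity of the payoff, and Sion's theorem.

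The difference is the regularity of the kernel. The paper asserts that $(\eta,\gamma)\mapsto I_\eta(\gamma)$ is bounded and jointly continuous, and reads off both the equality and attainment from that. You note instead that the first-hitting-time cutoff makes $\gamma\mapsto I_\eta(\gamma)$ only lower semicontinuous. This is a real issue, not a pedantic one. Routes that come within $1/n$ of $\mathcal T$ without touching it, and reach it only later, converge uniformly to a route whose first hitting time is strictly smaller. Since $D_\eta>0$ whenever $a_1>0$, the leakage of the limit route is strictly below the limit of the leakages. So the paper's continuity claim fails in general, for instance with the point target of Section~\ref{sec:computation}.

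Your version uses exactly what Sion requires: lower semicontinuity and compactness on the minimizer's side, and continuity on the maximizer's side. You then get attainment from lower semicontinuity of $\sigma\mapsto\sup_q\Phi(q,\sigma)$ and upper semicontinuity of $q\mapsto V_q$. This closes a gap that the paper's one-line argument leaves open.

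The alternative repair, which keeps the paper's continuity argument, is to treat a route as a pair $(\gamma,t^*)$ with $x_{t^*}\in\mathcal T$ and cost $\int_0^{t^*}\ell_\eta(x_s,P_s)\,ds$. That kernel is genuinely continuous on a compact set. It leaves both $\sup\inf$ and $\inf\sup$ unchanged, because $(\gamma,T_\gamma)$ weakly dominates $(\gamma,t^*)$ against every $\eta$.

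Both repairs rest on $\ell_\eta\ge 0$. This needs $D_\eta\ge0$, the nonnegativity of the Poisson divergence, in addition to $\rho_\eta\ge0$ from Lemma~\ref{lem:leak}; your citation of the lemma alone covers only the second term.
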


\begin{proof}
The  set $\mathcal A$ is compact, and $\Gamma(\tau)$ is compact
under the relaxed-control topology. Hence
$\Delta(\mathcal A)$ and $\Delta(\Gamma(\tau))$ are compact and convex
under weak convergence (by Prokhorov's theorem).  
Under the standing regularity assumptions,
$(\eta,\gamma)\mapsto I_\eta(\gamma)$ is bounded and continuous.
Therefore $(q,\sigma)\mapsto \mathbb E_{\eta\sim q}\mathbb E_{\gamma\sim\sigma} [I_\eta(\gamma)]$ is continuous and affine in each argument. Sion's minimax theorem~\cite{Sion1958} gives the stated equality, while compactness and continuity give attainment.
\end{proof}

Thus max-min value $W(\tau)$, introduced in Section~\ref{sec:frontier}, is also the minimax equilibrium value. The common deadline ensures that
$\Gamma(\tau)$ is independent of the defender's sensing
allocation, so both players optimize over fixed strategy sets.

\subsection{Extreme allocations and finite support}

Since $\mathcal A$ is the scaled simplex, its extreme points are $
\operatorname{ext}(\mathcal A) := \{\Pi e_1,\ldots,\Pi e_S\},$ corresponding to allocations that place the entire sensing budget at a single site. The convexity established in Remark~\ref{rem:convexeta} has two strategic consequences: randomization weakly improves upon the corresponding deterministic mean allocation, and it suffices to randomize over these extreme allocations.

\begin{prop}\label{prop:extreme}
For the detection model~\eqref{eqn:model}, fix $\tau \geq T_{\min}(x_0)$. For any $q\in\Delta(\mathcal A)$ with barycenter $\BAR\eta=\E_{\eta \sim q}[\eta]$,
\begin{equation}\label{eqn:jensen}
\inf_{\gamma \in \Gamma(\tau)} \E_{\eta\sim q}\big[I_\eta(\gamma)\big] \;\ge\; \inf_{\gamma \in \Gamma(\tau)} I_{\BAR\eta}(\gamma).
\end{equation}
Moreover, an optimal defender strategy may be chosen with support in $\operatorname{ext}(\mathcal A)$.
\end{prop}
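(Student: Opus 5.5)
The plan is to use the pointwise convexity of $\eta\mapsto\ell_\eta(x,P)$ from Remark~\ref{rem:convexeta}, together with the fact that the $P$-trajectory along a route does not depend on $\eta$.

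\textbf{Inequality \eqref{eqn:jensen}.} Fix a route $\gamma\in\Gamma(\tau)$. Its state path $(x_s,P_s)$ is determined by the control and by \eqref{eqn:augdyn}, and neither involves $\eta$. For each $s$, Jensen's inequality applied to the convex map $\eta\mapsto\ell_\eta(x_s,P_s)$ gives $\E_{\eta\sim q}[\ell_\eta(x_s,P_s)]\ge\ell_{\BAR\eta}(x_s,P_s)$. Here $\BAR\eta\in\mathcal A$ because $\mathcal A$ is convex and compact, and the expectation is finite by the intensity bounds of Assumption~\ref{assu:affine}. Integrating over $s\in[0,T_\gamma]$ and using Fubini (the integrand is bounded and jointly measurable) yields $\E_{\eta\sim q}[I_\eta(\gamma)]\ge I_{\BAR\eta}(\gamma)$ for every $\gamma$. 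Taking the infimum over $\Gamma(\tau)$ on both sides gives \eqref{eqn:jensen}.

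\textbf{Reduction to extreme allocations.} This requires the opposite direction: replace $q$ by a mixture on $\operatorname{ext}(\mathcal A)$ that weakly increases the payoff for every route. I would use the barycentric decomposition of the simplex. Write each $\eta=\sum_j(\eta_j/\Pi)\,\Pi e_j$ and define the Markov kernel $K(\eta,\cdot)=\sum_j(\eta_j/\Pi)\,\delta_{\Pi e_j}$, whose barycenter is $\eta$. Set $\tilde q:=\int K(\eta,\cdot)\,q(d\eta)$, which is supported in $\operatorname{ext}(\mathcal A)$. Convexity of $\eta\mapsto I_\eta(\gamma)$ follows from integrating the pointwise convexity. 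Jensen along each fiber, i.e.\ convexity applied to a finite convex combination, then gives $I_\eta(\gamma)\le\sum_j(\eta_j/\Pi)I_{\Pi e_j}(\gamma)$. Averaging over $q$ yields $\E_q[I_\eta(\gamma)]\le\E_{\tilde q}[I_\eta(\gamma)]$ for all $\gamma$, and taking the infimum over routes gives $V_q\le V_{\tilde q}$. So for any maximizer $q^\star$, which exists by the minimax proposition, $\tilde q^\star$ is also optimal and has support in $\operatorname{ext}(\mathcal A)$, a set of at most $S$ points.

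\textbf{Main obstacle.} The step most in need of care is measurability and integrability in defining $\tilde q$ and swapping the order of integration. This is mild: the weights $\eta_j/\Pi$ are continuous in $\eta$, and $I_\eta(\gamma)$ is bounded and continuous in $\eta$ by Assumption~\ref{assu:affine}. I would also note that the first inequality already shows a deterministic allocation is never better for the defender than randomizing around its mean.
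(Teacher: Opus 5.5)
Your proposal is correct and follows essentially the same argument as the paper. The first step is Jensen's inequality for the convex map $\eta\mapsto I_\eta(\gamma)$: you apply it pointwise in time and integrate, while the paper applies it to the integrated functional directly. The second step replaces each allocation by the single-site mixture given by $\eta=\sum_j(\eta_j/\Pi)\,\Pi e_j$, which weakly increases the payoff against every route. Your Markov-kernel construction of $\tilde q$ and your appeal to the minimax proposition for the existence of a maximizer only make explicit what the paper states informally.
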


\begin{proof}
For every fixed route $\gamma$, the convexity of $\eta\mapsto\ell_\eta(x,P)$ established in Remark~\ref{rem:convexeta} implies convexity of $\eta\mapsto I_\eta(\gamma)$. Hence Jensen's inequality gives $\mathbb E_{\eta\sim q}[I_\eta(\gamma)] \ge I_{\bar\eta}(\gamma)$.
Taking the infimum over $\gamma\in\Gamma(\tau)$ yields \eqref{eqn:jensen}.

Since $\mathcal A$ is the scaled simplex, every $\eta\in \mathcal A$ has the
representation $ \eta = \sum_{j=1}^S\frac{\eta_j}{\Pi}\,\Pi e_j$.
Therefore, again by convexity, $
I_\eta(\gamma)
\le
\sum_{j=1}^S\frac{\eta_j}{\Pi}
I_{\Pi e_j}(\gamma)$,
$\text{for every }\gamma$.
Thus each realized allocation $\eta$ can be replaced by the
corresponding mixture over the extreme allocations
$\{\Pi e_j\}_{j=1}^S$ without decreasing the defender's payoff
against any route. Applying this replacement to every allocation
in the support of $q$ yields a strategy supported on
$\operatorname{ext}(\mathcal A)$ that weakly dominates $q$. Hence an
optimal defender strategy may be chosen with support in
$\operatorname{ext}(\mathcal A)$.
\end{proof}

Thus a deterministic allocation that distributes the sensing budget across multiple sites is weakly dominated by a randomized strategy over single-site allocations with the same mean allocation. This conclusion follows from the convexity of the detectability model and does not imply that randomization is always necessary; a single extreme allocation may itself be optimal.

\begin{cor}\label{cor:support}
The vehicle has an equilibrium mixed strategy supported on at most $S$ routes.
\end{cor}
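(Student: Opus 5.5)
By Proposition~\ref{prop:extreme}, I will reduce the game to a finite matrix-like game in which the defender has only $S$ pure strategies. The argument then becomes a Carathéodory-type statement about the vehicle's minimax mixture.

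\textbf{Step 1: reduce the defender's side.} Fix $\tau\ge\Tmin(x_0)$ and write $I_j(\gamma):=I_{\Pi e_j}(\gamma)$ for $j=1,\dots,S$. Proposition~\ref{prop:extreme} gives, for every route $\gamma$ and every $\eta\in\mathcal A$, the bound $I_\eta(\gamma)\le\sum_j(\eta_j/\Pi)I_j(\gamma)$. Averaging over $\gamma\sim\sigma$ shows that
\[
\sup_{q\in\Delta(\mathcal A)}\E_{\eta\sim q}\E_{\gamma\sim\sigma}[I_\eta(\gamma)]=\max_{j}\E_{\gamma\sim\sigma}[I_j(\gamma)]
\]
for every $\sigma$. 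Here $\ge$ holds by taking $q=\delta_{\Pi e_j}$, and $\le$ holds by the convexity bound. By the minimax equality, the vehicle therefore has an optimal $\sigma^\star$ attaining $W(\tau)=\min_\sigma\max_j\E_{\sigma}[I_j(\gamma)]$. This minimizer exists by the compactness and continuity already established.

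\textbf{Step 2: pass to the payoff vector.} Consider the map $\Phi:\Gamma(\tau)\to\R^S$, $\Phi(\gamma)=(I_1(\gamma),\dots,I_S(\gamma))$. It is continuous on the compact space $\Gamma(\tau)$, so $K:=\Phi(\Gamma(\tau))$ is compact. The vector $m:=\E_{\sigma^\star}[\Phi(\gamma)]$ lies in $\operatorname{conv}K$, because the convex hull of a compact set in $\R^S$ is compact and the barycenter of a probability measure on $K$ lies in its closed convex hull. The vehicle's payoff depends on $\sigma$ only through this vector. Hence any $\sigma'$ with $\E_{\sigma'}[\Phi]\le m$ componentwise, and in particular one with equal mean, is also optimal.

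\textbf{Step 3: trim the support from $S+1$ to $S$ routes.} Carathéodory's theorem alone gives a representation of $m$ by at most $S+1$ points of $K$. To get down to $S$ I will use the facial structure.

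Let $v:=W(\tau)=\max_j m_j$. Consider the point $m'$ of $\operatorname{conv}K$ that minimizes $\max_j m'_j$. Then $m'$ lies on the boundary of $\operatorname{conv}K$: otherwise we could move from $m'$ in the direction $-(1,\dots,1)$, stay inside $\operatorname{conv}K$, and lower the maximum.

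A boundary point of a compact convex set in $\R^S$ lies in a supporting hyperplane $H$. The set $H\cap\operatorname{conv}K$ is the convex hull of $H\cap K$ and has dimension at most $S-1$. Carathéodory's theorem in dimension $S-1$ then writes $m'$ as a convex combination of at most $S$ points $\Phi(\gamma_1),\dots,\Phi(\gamma_S)$. Putting the corresponding weights on $\gamma_1,\dots,\gamma_S$ gives an optimal mixed strategy supported on at most $S$ routes. Such a strategy is an equilibrium strategy, since the minimax equality holds and the extrema are attained.

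\textbf{Main obstacle.} The hardest step is justifying the boundary and supporting-hyperplane argument cleanly. The key points are that $\operatorname{conv}K$ is compact, which holds because it is a finite-dimensional hull of a compact set, and that the face $H\cap\operatorname{conv}K$ equals $\operatorname{conv}(H\cap K)$. If that route proves awkward, my fallback is the LP viewpoint: the vehicle's problem is minimize $t$ subject to $\E_\sigma[I_j]\le t$ for all $j$ and $\sigma\ge0$, $\sum\sigma=1$. This has $S+1$ constraints, so a basic optimal solution over finitely many candidate routes has at most $S+1$ positive variables, one of which is $t$, leaving at most $S$ routes. A limiting argument over finite route sets, or applying this directly to the finitely many routes in the Carathéodory representation, would then complete the proof.
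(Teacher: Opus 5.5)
Your proof is correct. Its core matches the paper's proof: identify each route with its payoff vector in $\R^S$, place the optimal mean payoff vector in a hyperplane $H$ with $H\cap\operatorname{conv}K=\operatorname{conv}(H\cap K)$, and apply Carath\'eodory in dimension $S-1$.

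The genuine difference is how you obtain the hyperplane.
\begin{itemize}
\item The paper takes it from the equilibrium. Every route used by $\sigma^\star$ is a best response to $q^\star$, so all their payoff vectors lie in $\{z: q^\star\cdot z=W(\tau)\}$.
\item You obtain it abstractly. The min--max point of $\operatorname{conv}K$ cannot be interior, since a shift by $-\epsilon(1,\dots,1)$ would lower the maximum. You then invoke the supporting hyperplane theorem and the face identity.
\end{itemize}
These are the same object in disguise. Since $q^\star\cdot\Phi(\gamma)\ge W(\tau)$ for every $\gamma$ and $q^\star\cdot m\le\max_j m_j=W(\tau)$, the paper's hyperplane is precisely a supporting hyperplane of $\operatorname{conv}K$ at $m$, with normal $q^\star$.

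What each version buys:
\begin{itemize}
\item The paper's version is shorter and preserves the exact mean payoff vector of $\sigma^\star$. It ties the support bound to the indifference interpretation of Remark~\ref{rem:kink}. However, it tacitly needs $\sigma^\star$ to be carried by the set of best responses, which is closed by continuity of $\gamma\mapsto q^\star\cdot z(\gamma)$, so that the mean lies in the convex hull of their payoff vectors.
\item Your version works entirely on the vehicle side and never needs $q^\star$ or the best-response property. It also makes explicit the compactness of $\operatorname{conv}K$ that the paper leaves implicit.
\end{itemize}

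The face identity you flagged as the main obstacle is a two-line check. If $a\cdot k\ge c$ on $K$ and a convex combination of points of $K$ attains $c$, every point with positive weight lies in $H$. The argument also survives when $\operatorname{conv}K$ has empty interior, since any hyperplane containing it then serves. The LP fallback is unnecessary.
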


\begin{proof}
By Proposition~\ref{prop:extreme}, the defender may restrict to the $S$ extreme
allocations $\{\Pi e_j\}_{j=1}^S$. Each route $\gamma$ is therefore
identified with its payoff vector $z(\gamma):= \bigl(I_{\Pi e_1}(\gamma),\ldots,I_{\Pi e_S}(\gamma)\bigr)\in\mathbb R^S$. Let $(q^\star,\sigma^\star)$ be an equilibrium with value
$W(\tau)$. Every route used by $\sigma^\star$ is a best response
to $q^\star$, hence its payoff vector satisfies
$q^\star\!\cdot z(\gamma)=W(\tau)$. Thus these vectors lie in an
affine subspace of dimension at most $S-1$. By Carath\'eodory's
theorem, their mean payoff vector can be represented using at
most $S$ such vectors. The corresponding mixture is therefore
also an equilibrium vehicle strategy. 
\end{proof}

\begin{remark}\label{rem:kink}
For fixed $(x_0, P_0, \tau)$, the map $ q\mapsto V_q(x_0,P_0,\tau) = \inf_{\gamma\in\Gamma(\tau)} \mathbb E_{\eta\sim q}[I_\eta(\gamma)]$ is concave, being the infimum of functionals linear in $q$, and may be nonsmooth when several vehicle best-response routes are active. Such kinks correspond to indifference among active routes and can lead to equilibrium mixing by the vehicle.
\end{remark}

\section{Algorithm and Numerical Examples}\label{sec:computation}

By Proposition~\ref{prop:extreme}, the defender may restrict to the \(S\) extreme allocations \(\operatorname{ext}(\mathcal A)=\{\Pi e_j\}_{j=1}^S\), while the vehicle retains the route set $\Gamma(\tau)$. At a fixed deadline \(\tau\), this yields a zero-sum game with finitely many defender actions and an infinite set of vehicle routes. We solve this game by column generation over vehicle routes~\cite{GilmoreGomory1961,McMahan2003}.

Let $\Gamma_n=\{\gamma_0, \gamma_1,\ldots,\gamma_{n}\}\subset\Gamma(\tau)$ be the routes generated after $n$ iterations, and define the
restricted-game value
\[
\BAR W_n := \sup_{q\in\Delta(\operatorname{ext}(\mathcal A))} \inf_{\sigma\in\Delta(\Gamma_n)} \mathbb E_{\eta\sim q} \mathbb E_{\gamma\sim\sigma} [I_\eta(\gamma)].
\]
Since $\Gamma_n\subset\Gamma(\tau)$, $W(\tau)\le \BAR W_n$.

Let $(q_n,\sigma_n)$ be an equilibrium of the restricted game on $\operatorname{ext}(\mathcal A)\times\Gamma_n$. A best response to $q_n$ over full route set is
\[
\gamma_{n+1}
\in
\arg\min_{\gamma\in\Gamma(\tau)}
\mathbb E_{\eta\sim q_n}[I_\eta(\gamma)].
\]
By Section~\ref{sec:red}, this best response is obtained by solving the HJB equation~\eqref{eqn:hjb} against the mixture $q_n$ and backtracking an optimal trajectory. In column-generation terminology, this computation is the pricing oracle. Under exact pricing, define
\[
\underline W_n := \mathbb E_{\eta\sim q_n} [I_\eta(\gamma_{n+1})] = V_{q_n}(x_0,P_0,\tau).
\]
Since $W(\tau)=\sup_q V_q(x_0,P_0,\tau)$, $\underline W_n\le W(\tau)$. Therefore
$\underline W_n \le W(\tau) \le \BAR W_n$, and the gap $\BAR W_n-\underline W_n$ provides an a posteriori optimality certificate for the full game.

\begin{algorithm}[htbp]
\caption{Column generation with an HJB pricing oracle at deadline $\tau$}\label{alg:do}
\KwIn{$\operatorname{ext}(\mathcal A)$; deadline $\tau$; tolerance $\EPS$}
Initialize $\Gamma_0$ with a time-optimal route\;
\For{$n=0,1,2,\dots$}{
 Solve the restricted zero-sum game on $\operatorname{ext}(\mathcal A)\times\Gamma_n \to
(q_n,\sigma_n)$ and value $\BAR W_n$\;
Solve \eqref{eqn:hjb} against \(q_n\) and backtrack a best-response route \(\gamma_{n+1}\) at deadline $\tau$\;
Set $
    \underline W_n
    \leftarrow
    \mathbb E_{\eta\sim q_n}[I_\eta(\gamma_{n+1})]$\;
 \If{$\BAR W_n-\underline W_n\le\EPS$}{\Return $(q_n,\sigma_n)$\;}
 $\Gamma_{n+1}\gets\Gamma_n\cup\{\gamma_{n+1}\}$\;
}
\end{algorithm}

For a finite route discretization and exact pricing, column generation terminates after finitely many iterations. In our implementation, the HJB pricing oracle is itself
discretized. Hence the reported gap $\BAR W_n-\underline W_n$  certifies the resulting discretized game, while approximation of
the continuous problem additionally incurs HJB discretization error.

For $d=2$ and scalar $P$, the HJB is 3D in $(x_1,x_2,P)$ and is marched backward in $\tau$ using a monotone upwind or semi-Lagrangian scheme. Since $\dot P=-\kappa\omega(x)P^2\le0$, transport in the $P$-direction is one-sided, and the closed form~\eqref{eqn:Pclosed} is used to propagate $P$ along discrete trajectory segments.

\noindent\textbf{Numerical setup.} We consider the compactly supported vortex field of \cite{Greeley2023} $\BAR v(x)=d_r\max(m_r\|w\|+b_r,0)(-w_2,w_1)/\|w\|$ with $w=x-x_c$, on $\Omega=[-4,4]^2$. We take $\smax=1$,  $d_r=1$, $b_r=0.75$, $m_r=-0.6$, giving support radius $R=1.25$ and Assumption~\ref{assu:control} holds. The vortex center is $x_c=(1.5,-0.4)$, the vehicle starts from  $x_0=(2.8,2.4)$ and the target is $T=\{(-2.6,-2.2)\}$.  $\Omega$ is chosen large enough that computed optimal trajectories remain in its interior.

We set the local information rate to the normalized current speed $\omega(x)=\frac{|\BAR v(x)|}{\max_{y\in\Omega}|\BAR v(y)|}, \; \max_{y \in \Omega} |\bar v(y)| =b_r = 0.75$, so the most informative region is the vortex core. Because the vortex
center lies off the direct start-target corridor, acquiring information
there requires a deliberate detour. Unless otherwise stated, we use $\kappa=6$ and $P_0=1$. For the defender,  the total sensing budget is normalized to $\Pi=1$. Each candidate site has a Gaussian profile $S_j$ of unit amplitude and width $0.9$, and the intensities follow \eqref{eqn:model} with $\lambda_{\mathrm{bg}}=0.15$ and $a_1=0.08$. In \eqref{eqn:rho}, we take
$k(x) \equiv k_0=3$.

The arrival-time field $\Tmin$ is computed by the fast sweeping method \cite{Zhao2005}. Grid transit times use the compensated ground speed $\BAR v\cdot\hat e+(\smax^2-|\BAR v_\perp|^2)^{1/2}$ along each edge, consistent with the eikonal equation. On the $161\times161$ grid, the time-optimal graph transit is $7.218$, compared with $\Tmin(x_0)=7.020$. 
Minimum-leakage routes are computed by Dijkstra's algorithm on the augmented grid of $(x,P)$, with $P$ discretized into $70$ bins and propagated across each edge using~\eqref{eqn:Pclosed}. Deadline-indexed values by the backward-in-$\tau$ march of \eqref{eqn:hjb} with boundary data given in Proposition~\ref{prop:domain}. 

As a benchmark, we use a frozen-$P$ planner that plans with
$P\equiv P_0$, ignoring the effect of route choice on future
information acquisition. Its realized leakage is nevertheless
evaluated with $P$ evolving according to~\eqref{eqn:Pclosed}. Thus, both planners experience the same information dynamics during evaluation, and
the comparison isolates the value of planning for information
acquisition rather than the value of information acquisition itself.

\noindent\textbf{Numerical results.} 
Figure~\ref{fig:mechanism}(a,b) illustrates the learning-detour mechanism. A single sensor is located at the target, so some arrival exposure is unavoidable. The dual planner detours through the informative region beforehand, reducing $P$ before the main exposure. It accepts a $21\%$ longer transit time ($8.71$ versus $7.22$) while reducing realized leakage from $2.96$ to $1.80$, a $39\%$ reduction. Panel (c) illustrates the ordering effect of Theorem~\ref{thm:vol}. When detection occurs near departure, the main exposure precedes information acquisition and the benefit of learning is negligible; when detection occurs near arrival, information is acquired first and the leakage reduction is substantial. The improvement is largest at intermediate $\kappa$: learning is too slow for small $\kappa$, while for large $\kappa$ even the frozen-$P$ route acquires substantial information. This nonmonotonic dependence of the improvement on $\kappa$ does not contradict Theorem~\ref{thm:vol}, which concerns a fixed route, whereas panel (c) compares planners whose selected routes vary with $\kappa$.

\begin{figure*}[htbp]
\centering
\includegraphics[width=0.9\textwidth]{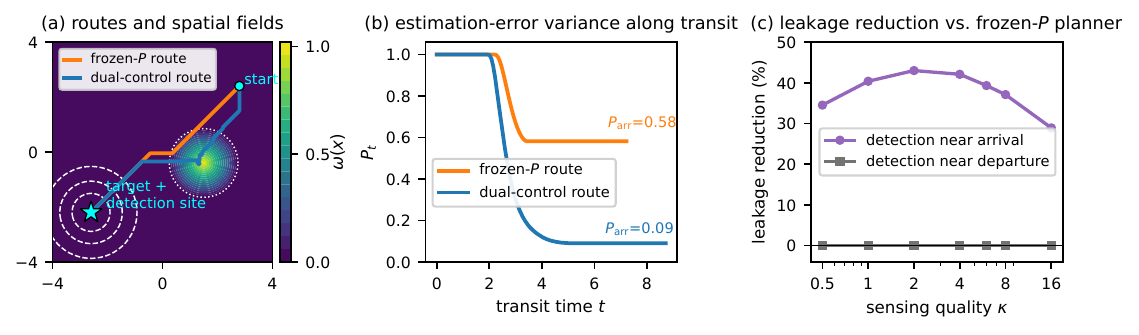}
\caption{Learning-detour mechanism. 
(a) The frozen-$P$ route (orange, nearly time-optimal) and dual-control route (blue);  color shows the local information
rate $\omega(x)$, dashed contours show level sets of $D_\eta(x)$,
and the dotted circle marks the vortex support. 
(b) Estimation-error variance $P_t$ along the two routes. (c) Leakage reduction of the dual-control planner relative to the frozen-$P$ planner as a
function of $\kappa$, for detection sites near
departure and near arrival. 
}
\label{fig:mechanism}
\end{figure*}

\begin{figure*}[t]
\centering
\includegraphics[width=0.9\textwidth]{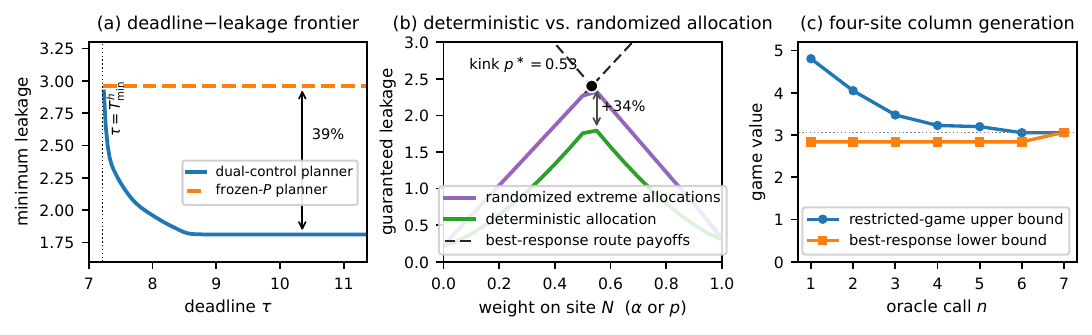}
\caption{(a) Deadline--leakage frontier for the single-site example. (b) Deterministic (green) allocation \emph{vs.} randomization (purple) for the two-site game at $\tau=1.35T_{\min}$ and $\kappa=0$. The kink at $p^\star=0.53$ occurs where two vehicle best-response routes have equal expected leakage. (c) Column generation for the four-site game. The restricted-game upper bound and the full-route best-response lower bound converge within seven oracle calls.}

\label{fig:game}
\end{figure*}

Figure~\ref{fig:game}(a) shows the deadline-indexed value for a fixed defender allocation. As the deadline increases, the dual-control
planner exploits the additional route flexibility to reduce
leakage, whereas the frozen-$P$ planner cannot attain the same
levels. Thresholding the value curve yields the covert-time
frontier $T_{\rm cov}^q$ of Section~\ref{sec:frontier}; for example, $B=2$ is
achieved at $\tau\approx7.9$, corresponding to a price of
covertness of about $0.88$ time units, or $12.5\%$ relative to
$T_{\min}(x_0)=7.020$.

Figure~\ref{fig:game}(b) exhibits the allocation effect of Proposition~\ref{prop:extreme} using two candidate sites $z_N = (2.2,0.8)$ and $z_S = (0.8,2.2)$. 
We take \(\kappa=0\) to isolate allocation convexity from learning.
We compare the deterministic split $\eta^\alpha=\alpha e_N+(1-\alpha)e_S$, $\alpha\in[0,1]$ with the randomized strategy $q^p=p\,\delta_{e_N}+(1-p)\delta_{e_S}$, $p\in[0,1]$. When $p=\alpha$, the two strategies have the same mean allocation. Nevertheless, randomization yields a larger guaranteed leakage, with maximum value $2.40$ at $p^\star=0.53$, compared with $1.79$ for the best deterministic split, an increase of approximately $34\%$. The kink at $p^\star$ occurs where two vehicle best-response routes
have equal expected leakage. 

Finally, Fig.~\ref{fig:game}(c) demonstrates the
column-generation procedure on a four-site game. In addition to the
two sites used in panel~(b), two candidate sites are placed near the
arrival region at $(-2.2,-0.7)$ and $(-0.7,-2.2)$. Algorithm~\ref{alg:do} reaches the prescribed
tolerance after seven HJB oracle calls. At the resulting equilibrium, the defender randomizes over all four sites, while the vehicle mixes over four of the seven generated
routes, with game value $3.06$. The restricted-game and pricing values agree to within $0.4\%$ of the game value.

\section{Conclusion}\label{sec:conclusion}

We developed a framework for covert navigation in an uncertain flow, where the vehicle's route affects both its motion and the information it acquires about the environment. By reducing the estimation process to the evolution of the estimation-error variance $P$, the model retains this route-dependent dual effect while yielding a deterministic control problem. For a fixed defender strategy and arrival deadline, the vehicle's dual-control problem is characterized by a first-order HJB equation in the state variables $(x, P)$ on the domain $\tau\ge T_{\min}(x)$, and thresholding its deadline-indexed value yields the covert-time frontier. A closed-form sensitivity formula further shows how the benefit of information depends on the temporal ordering of information acquisition and detection exposure. We then formulate the interaction with the defender as a zero-sum sensing-allocation game.  Convexity with respect to the sensing allocation allows the defender to restrict attention to randomization over extreme single-site allocations, while the resulting game can be solved by column generation using the HJB solve as a best-response oracle. The numerical examples illustrate the learning detour, the deadline--leakage tradeoff, and the advantage of randomization over deterministic budget splitting.

Several extensions remain. A natural next step is the noisy-sensing regime, in which the vehicle's estimate becomes a stochastic latent process and the defender must marginalize over that uncertainty, leading to coupled filtering problems. The strategic sensing game can also be combined with a tactical layer in which, conditional on a realized deployment, the defender adapts its sensing policy as its belief about the vehicle evolves. Incorporating declaration lead time as the mission objective and more realistic ocean-acoustic fields would further connect the framework to the sequential-testing setting of \cite{zhou2025integrating,zhou2025adversarial}.

\makeatletter
\renewenvironment{thebibliography}[1]{
  \section*{References}
  \footnotesize
  \list{\@biblabel{\@arabic\c@enumiv}}
       {\settowidth\labelwidth{\@biblabel{#1}}
        \leftmargin\labelwidth \advance\leftmargin\labelsep
        \itemsep -1pt plus .3pt \parsep \z@
        \usecounter{enumiv}\let\p@enumiv\@empty
        \renewcommand\theenumiv{\@arabic\c@enumiv}}
  \sloppy\clubpenalty4000\widowpenalty4000
  \sfcode`\.\@m}{\def\@noitemerr{\@latex@warning{Empty `thebibliography' environment}}\endlist}
\makeatother
\bibliographystyle{IEEEtran}
\bibliography{covert_refs}

@article{KalmanBucy1961,
  author  = {Kalman, Rudolf E. and Bucy, Richard S.},
  title   = {New Results in Linear Filtering and Prediction Theory},
  journal = {Journal of Basic Engineering},
  volume  = {83},
  number  = {1},
  pages   = {95--108},
  year    = {1961},
  doi     = {10.1115/1.3658902}
}

@article{HuYang2026paper1,
  title={Strategic Inference of Adversarial Navigation Objectives for Unmanned Underwater Vehicles},
  author={Hu, Ruimeng and Yang, Xu},
  journal={arXiv:2607.21945},
  year={2026}
}

@inproceedings{Greeley2023,
  title={Reinforcement learning for improved guidance and power management of unmanned underwater vehicles},
  author={Greeley, Brian and Brandman, Jeremy and Olson, Colin},
  booktitle={OCEANS 2023-MTS/IEEE US Gulf Coast},
  pages={1--10}
}

@article{Ornik2018,
  title={Exploiting partial observability for optimal deception},
  author={Karabag, Mustafa O and Ornik, Melkior and Topcu, Ufuk},
  journal={IEEE Transactions on Automatic Control},
  volume={68},
  number={7},
  pages={4443--4450},
  year={2022}
}

@inproceedings{Savas2022,
  title={Deceptive decision-making under uncertainty},
  author={Savas, Yagiz and Verginis, Christos K and Topcu, Ufuk},
  booktitle={Proceedings of the AAAI Conference on Artificial Intelligence},
  volume={36},
  number={5},
  pages={5332--5340},
  year={2022}
}

@article{Farokhi2019,
  title={Ensuring privacy with constrained additive noise by minimizing {F}isher information},
  author={Farokhi, Farhad and Sandberg, Henrik},
  journal={Automatica},
  volume={99},
  pages={275--288},
  year={2019},
  publisher={Elsevier}
}

@article{Nekouei2019,
  title={Information-theoretic approaches to privacy in estimation and control},
  author={Nekouei, Ehsan and Tanaka, Takashi and Skoglund, Mikael and Johansson, Karl H},
  journal={Annu. Rev. Control},
  volume={47},
  pages={412--422},
  year={2019},
  publisher={Elsevier}
}

@book{BardiCapuzzo1997,
  title={Optimal control and viscosity solutions of {H}amilton-{J}acobi-{B}ellman equations},
  author={Bardi, Martino and Dolcetta, Italo Capuzzo and others},
  volume={12},
  year={1997},
  publisher={Springer}
}

@article{Soner1986,
  title={Optimal control with state-space constraint {I}},
  author={Soner, Halil Mete},
  journal={SIAM J. Control Optim.},
  volume={24},
  number={3},
  pages={552--561},
  year={1986},
  publisher={SIAM}
}

@article{Zhao2005,
  title={A fast sweeping method for eikonal equations},
  author={Zhao, Hongkai},
  journal={Math. Comput.},
  volume={74},
  number={250},
  pages={603--627},
  year={2005}
}

@article{Sion1958,
  author = {Sion, Maurice}, title = {On general minimax theorems},
  journal = {Pacific J. Math.}, volume = {8}, number = {1}, pages = {171--176}, year = {1958}}

@inproceedings{McMahan2003,
  title={Planning in the presence of cost functions controlled by an adversary},
  author={McMahan, H Brendan and Gordon, Geoffrey J and Blum, Avrim},
  booktitle={Proc. 20th Int. Conf. Mach. Learn. ({ICML})},
  pages={536--543},
  year={2003}
}

@article{Feldbaum1960,
  title={Dual control theory. {I}},
  author={Feldbaum, Aleksandr Aronovich},
  journal={Avtomatika i Telemekhanika},
  volume={21},
  number={9},
  pages={1240--1249},
  year={1960}
}

@article{BarShalomTse1974,
  title={Dual effect, certainty equivalence, and separation in stochastic control},
  author={Bar-Shalom, Yaakov and Tse, Edison},
  journal={IEEE Trans. Autom. Control},
  volume={19},
  number={5},
  pages={494--500},
  year={1974},
  publisher={IEEE}
}

@article{HollingerSukhatme2014,
  title={Sampling-based robotic information gathering algorithms},
  author={Hollinger, Geoffrey A and Sukhatme, Gaurav S},
  journal={Int. J. Robot. Res.},
  volume={33},
  number={9},
  pages={1271--1287},
  year={2014},
  publisher={SAGE Publications Sage UK: London, England}
}

@book{Tambe2011,
  title={Security and game theory: algorithms, deployed systems, lessons learned},
  author={Tambe, Milind},
  year={2011},
  publisher={Cambridge university press}
}

@article{GilmoreGomory1961,
  title={A linear programming approach to the cutting-stock problem},
  author={Gilmore, Paul C and Gomory, Ralph E},
  journal={Operations research},
  volume={9},
  number={6},
  pages={849--859},
  year={1961},
  publisher={INFORMS}
}

@inproceedings{zhou2025integrating,
  title={Integrating Sequential Hypothesis Testing into Adversarial Games: A Sun Zi-Inspired Framework},
  author={Zhou, Haosheng and Ralston, Daniel and Yang, Xu and Hu, Ruimeng},
  booktitle={2025 IEEE 64th Conference on Decision and Control (CDC)},
  pages={4540--4546},
  year={2025}
}

@article{zhou2025adversarial,
  title={Adversarial Decision-Making in Partially Observable Multi-Agent Systems: A Sequential Hypothesis Testing Approach},
  author={Zhou, Haosheng and Ralston, Daniel and Yang, Xu and Hu, Ruimeng},
  journal={IEEE Trans. Control Netw. Syst.},
  year={2026}
}

@article{ralston2026information,
  title={Information Revelation and Alignment Faking in Stochastic Differential Games},
  author={Ralston, Daniel and Yang, Xu and Hu, Ruimeng},
  journal={arXiv:2603.17197},
  year={2026}
}

@article{kim2026deception,
  title={Deception in Linear-Quadratic Control},
  author={Kim, Yerin and Zhou, Haosheng and Benvenuti, Alexander and Hu, Ruimeng and Hale, Matthew},
  journal={arXiv:2604.00227},
  year={2026}
}

\end{document}